\documentclass[12pt,twoside,a4paper]{amsart}

\usepackage[margin=2.5cm]{geometry}
\usepackage{amssymb,mathtools}
\usepackage{amsrefs}
\usepackage{mathrsfs}
\usepackage[dvipsnames]{xcolor}
\usepackage{hyperref}
\usepackage{float}
\usepackage{graphicx}
\usepackage{tikz}
\usetikzlibrary{patterns}
\usepackage{subcaption}
\usepackage{relsize}

\newtheorem{theorem}{Theorem}[section]
\newtheorem{lemma}[theorem]{Lemma}

\newtheorem{corollary}[theorem]{Corollary}
\theoremstyle{definition}

\theoremstyle{remark}
\newtheorem{remark}[theorem]{Remark}

\numberwithin{equation}{section}
\allowdisplaybreaks

\newcommand{\R}{\mathbb{R}}

\newcommand{\Z}{\mathbb{Z}}

\def\supp{{\rm supp\,}}
\usepackage{esint}

\title[Power-Weight Estimates for the Fractional Spherical Maximal Operator]{Sharp Power-Weight Estimates for the Fractional Spherical Maximal Operator}

\author[R. Basak]{Riju Basak}
\address[R. Basak]{Department of Mathematics, National Taiwan Normal University, No. 88, Section 4, Tingzhou Road, Wenshan District, Taipei City, Taiwan 116, R.O.C.
}
\email{rijubasak52@gmail.com}

\author[S.S. Choudhary]{Surjeet Singh Choudhary}
\address[S.S. Choudhary]{Mathematics Division, National Center for Theoretical Sciences, NTU, Cosmology Building, No. 1, Sec. 4, Roosevelt Rd., Taipei City, Taiwan 106, R.O.C.
}
\email{surjeet@ncts.ntu.edu.tw}

\author[C.Y. Shen]{Chun-Yen Shen}
\address[C.Y. Shen]{Department of Mathematics, National Taiwan University, No. 1, Sec. 4, Roosevelt Rd., Taipei City, Taiwan 106, R.O.C.
}
\email{cyshen@ntu.edu.tw}

\author[D. Spector]{Daniel Spector}
\address[D. Spector]{Department of Mathematics, National Taiwan Normal University, No. 88, Section 4, Tingzhou Road, Wenshan District, Taipei City, Taiwan 116, R.O.C.
\newline
Department of Mathematics, University of Pittsburgh, Pittsburgh, PA 15261, USA
}
\email{spectda@gapps.ntnu.edu.tw}

\subjclass[2020]{42B35, 42B25, 42B37}
\keywords{Fractional spherical maximal function, weighted inequality}

\begin{document}

\begin{abstract}
For $0\leq\alpha<n$, let $A_{*}^{\alpha}$ denote the fractional spherical maximal operator.  
In this paper we characterise the range of $\beta$ for which the $L^p-L^q$ power-weight estimate 
	\[\left\|A_{*}^{\alpha} f\right\|_{L^q(|x|^{q\beta})} \leq C \left\|f\right\|_{L^p(|x|^{p\beta})}\]  
holds.
In the diagonal case $p=q$, our work gives an affirmative answer to a conjecture of Lee.  
We prove analogous results for the lacunary fractional spherical maximal operator.  
\end{abstract}
	
	\maketitle
	
	\section{Introduction and Main Results}
	
	For a function $f \in \mathcal{S}(\mathbb{R}^n)$ and $0\leq \alpha< n$, we define the fractional spherical average as
	\[A_t^\alpha f(x):=t^{\alpha}\int_{S^{n-1}} f(x-ty) \, d\sigma(y)=t^{\alpha}f\ast\sigma_t(x),\]
	where $d\sigma$ is the normalised surface measure on $S^{n-1}$ and $\sigma_t$ is the normalised surface measure on the sphere of radius $t$.  Consider the fractional spherical maximal function, its lacunary counterpart, and the local spherical maximal function, defined by
	\begin{align}
	A_{*}^{\alpha}f(x)=\sup_{t>0} \big|A_t^\alpha f(x)\big|, \quad A_{lac}^{\alpha}f(x)=\sup_{k\in \mathbb{Z}} \big|A_{2^k}^\alpha f(x)\big|, \quad \widetilde{A}_{*}f(x)=\sup_{t \in [1,2]} \big|A_t f(x)\big|, \label{local_spherical}
	\end{align}
	respectively.  

Classical results of Stein \cite{Stein} and Bourgain \cite{Bourgain} assert that for $\alpha=0$, the spherical maximal operator $A_{*}\equiv A_{*}^{0}$ extends as a bounded operator on $L^p(\mathbb{R}^n)$ for $n/(n-1)<p\leq +\infty$.  Analogous results for the lacunary spherical maximal operator were established by Coifman and Weiss \cite[p.~246]{CoifmanWeiss} and C.P. Calder\'on \cite{Calderon_lacunary}.  For $\alpha \in (0,n)$, the study of $L^p(\R^n)\to L^q(\R^n)$ boundedness of $A_{*}^{\alpha}$ was initiated by Oberlin \cite{Oberlin}, while estimates in the interior of the sharp region of boundedness can be deduced from the work of Schlag \cite{Schlag}, Schlag and Sogge \cite{Schlag-Sogge}, see e.g. \cite[p.~534]{KS}; we refer the reader to the introduction of \cite{BCS} for a less abridged account of the history.

In the recent paper \cite{BCS}, the first, second, and fourth-named authors established restricted weak-type estimates for the fractional spherical maximal operator and its lacunary analogue on the boundary of their maximal type-sets.  Precisely, for $O=(0,0), P=(\frac{n-1}{n}, \frac{n-1}{n}), Q=(\frac{n-1}{n}, \frac{1}{n})$, $R=(\frac{n^2-n}{n^2+1}, \frac{n-1}{n^2+1})$, $A=(1,1), B=(\frac{n}{n+1}, \frac{1}{n+1})$, the authors extended the classical strong-type estimates in the interior to restricted weak-type estimates on the boundary of the following regions:
\begin{figure}[H]
		\centering
		\begin{subfigure}[b]{0.45\linewidth}
			\begin{tikzpicture}[scale=4]
				\tiny
				\fill[lightgray] (0,0)--(0.5,0.5)--(2/5,1/5)--cycle;
				\draw[thin][->]  (0,0)node[left]{$O$} --(1.15,0) node[right]{$\frac{1}{p}$};
				\draw[thin][->]  (0,0) --(0,1.2) node[left]{$\frac{1}{q}$};
				\draw[densely dotted] (0,1)node[left]{$(0,1)$}--(1,1)node[right]{$(1,1)$};
				\draw [densely dotted] (1,0)node[below]{$(1,0)$} --(1,1);
				\draw[densely dotted] (2/5,1/5)node[right]{$R$}--(0.5,0.5)node[above]{$P$}--(0,0)--cycle;
				\node at (0.5,0.5) {\tiny{$\circ$}};
				\node at (2/5,1/5) {\tiny{$\circ$}};
				\draw[thin] (0,0) --(0.5,0.5);
			\end{tikzpicture}
			\caption{$n= 2$.}	\label{Fig:full2}
		\end{subfigure}	
		\begin{subfigure}[b]{0.45\linewidth}
			\begin{tikzpicture}[scale=4]
				\tiny
				\fill[lightgray] (0,0)--(0.8,0.8)--(0.8,0.2)--(20/26,4/26)--cycle;
				\draw[thin][->]  (0,0)node[left]{$O$} --(1.15,0) node[right]{$\frac{1}{p}$};
				\draw[thin][->]  (0,0) --(0,1.2) node[left]{$\frac{1}{q}$};
				\draw[densely dotted] (0,1)node[left]{$(0,1)$}--(1,1)node[right]{$(1,1)$};
				\draw [densely dotted] (1,0)node[below]{$(1,0)$} --(1,1);
				\draw[densely dotted] (20/26,4/26)node[below]{$R$}--(0.8,0.2)node[right]{$Q$}--(0.8,0.8)node[above]{$P$}--(0,0)--cycle;
				\node at (0.8,0.8) {\tiny{$\circ$}};
				\node at (4/5,1/5) {\tiny{$\circ$}};
				\node at (20/26,4/26) {\tiny{$\circ$}};
				\draw[thin] (0,0) --(0.8,0.8);
			\end{tikzpicture}
			\caption{$n\geq 3$.}
		\end{subfigure}	
		\caption{Regions of $L^p(\R^n)\to L^q(\R^n)$ boundedness of $A_{*}^{\alpha}$.}\label{Fig:full}
	\end{figure}
	
	\begin{figure}[H]
		\centering
		\begin{tikzpicture}[scale=5]
			\tiny
			\fill[lightgray] (0,0)--(0.8,0.2)--(1,1)--cycle;
			\draw[thin][->]  (0,0)node[left]{$O$} --(1.15,0) node[right]{$\frac{1}{p}$};
			\draw[thin][->]  (0,0) --(0,1.2) node[left]{$\frac{1}{q}$};
			\draw[densely dotted] (0,1)node[left]{$(0,1)$}--(1,1)node[right]{$(1,1)$};
			\draw [densely dotted] (1,0)node[below]{$(1,0)$} --(1,1);
			\draw[densely dotted] (0.8,0.2)node[right]{$B$}--(1,1)node[above]{$A$}--(0,0)--cycle;
			\node at (1,1) {\tiny{$\circ$}};
			\node at (0.8,0.2) {\tiny{$\circ$}};
			\draw[thin] (0,0) --(1,1);
		\end{tikzpicture}
		\caption{Region of $L^p(\R^n)\to L^q(\R^n)$ boundedness of $A_{lac}^{\alpha}$.}
		\label{Fig:lac}
	\end{figure}
	
In this paper, we are interested in the question of power-weighted estimates for these operators, i.e. of the validity of the inequalities
	\begin{equation}\label{eq:main}
		\left\|A_{*}^{\alpha} f\right\|_{L^q(|x|^{q\beta})} \leq C \left\|f\right\|_{L^p(|x|^{p\beta})}
	\end{equation}
and
	\begin{equation}\label{eq:main_lac}
		\left\|A_{lac}^{\alpha} f\right\|_{L^q(|x|^{q\beta})} \leq C \left\|f\right\|_{L^p(|x|^{p\beta})}
	\end{equation}
for an admissible range of $\beta$.  The history of such estimates in the literature begins with the work of Rubio de Francia \cite{Rubio} and Duoandikoetxea and Vega \cite{DV}, who proved $L^p(|x|^{p\beta})\to L^p(|x|^{p\beta})$ estimates for the spherical maximal function for $n/(n-1)<p\leq +\infty$ when
\[-\frac{n-1}{p}<\beta<n-1-\frac{n}{p},\]
and for its lacunary counterpart for $1<p\leq +\infty$ when
\[-\frac{n-1}{p}\leq \beta<\frac{n-1}{p'}.\]
The results in \cite{Rubio} concern positive exponents for the full spherical maximal function, while \cite{DV} gives the extended range of negative exponents, the results for the lacunary spherical maximal function, and examples which show the ranges of $\beta$ are also necessary, except possibly the lower endpoint $\beta= (1-n)/p$ for the full spherical maximal function.  When $p\geq 2$, the validity of the estimate for this lower endpoint was subsequently proved by Lee \cite{Lee}, who conjectured that this restriction on $p$ is not necessary.  For further results concerning diagonal estimates, we refer the reader to \cite{DS, FRS} for power-weighted estimates for maximal functions defined by general dilation sets, to \cite{DMO, NRS} for power-weighted estimates for radial functions, and to \cite{Lacey} for $A_p$-weighted estimates by sparse domination.

Concerning off-diagonal estimates, Cowling, Garc\'ia-Cuerva, and Gunawan \cite{CGG} proved the following theorem in the more general setting of $A_p$ weights. 

	\begin{theorem}[\cite{CGG}, Theorem 4.3]
		\label{thm:CGG}
		Let $n\geq 3$. Assume that $\frac{n}{n-1}<p<q<n$ and $\alpha=\frac{n}{p}-\frac{n}{q}.$ For
		\[\max\left\{0,1-\frac{q}{p'}\right\}<\gamma\leq 1-\frac{q}{n},\]
		define
		\[s=\frac{q+2p'\gamma-p'}{p'\gamma}.\]
		If $w\in A_s$, then
		\[\left\|A_{*}^{\alpha} f\right\|_{L^q(w^\gamma)} \leq C \left\|f\right\|_{L^p(w^{p\gamma/q})}.\]
	\end{theorem}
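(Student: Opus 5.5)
This is the weighted off-diagonal estimate of Cowling, Garc\'ia-Cuerva and Gunawan. The plan is to pass through the fractional maximal operator and an interpolation between the unweighted $L^p$ bound of $A_*$ and a trivial pointwise bound.

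\textbf{Pointwise step.} Fix $x$ and $t>0$ and split $t^{\alpha}|f|\ast\sigma_t(x)$ by H\"older. On the one hand
\[
t^{\alpha}\,|f|\ast\sigma_t(x)=t^{\alpha}\big(|f|\ast\sigma_t(x)\big)^{1-\theta}\big(|f|\ast\sigma_t(x)\big)^{\theta}.
\]
On the other hand, $\sup_t t^{\alpha}(|f|^r\ast\sigma_t)^{1/r}$ is controlled through the spherical maximal operator applied to $|f|^r$.

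A cleaner route is the standard one: write
\[
A_*^\alpha f(x)\le \big(A_*(|f|^{p/q}\cdot\text{stuff})\big)\cdots,
\]
and aim for a Hedberg-type inequality
\[
A_*^\alpha f(x)\lesssim \big(A_*(|f|^{s_0})(x)\big)^{\lambda/s_0}\,\|f\|_{L^p(w^{p\gamma/q})}^{1-\lambda}
\]
with a suitable weighted normalisation. One then uses that $A_*$ is bounded on $L^{r}(w)$ for $w\in A_{r/r_0}$ with $r_0>n/(n-1)$. That boundedness comes from Duoandikoetxea--Rubio de Francia extrapolation, or from the $A_p$ result for $A_*$ obtained by sparse domination (Lacey), which I take as known.

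\textbf{Weights.} The heart of the matter is bookkeeping: choose exponents so that
\[
w^\gamma\cdot\big(\text{power of }A_*\big)^{q}
\]
integrates to $\|f\|_{L^p(w^{p\gamma/q})}^p$. Concretely:
\begin{itemize}
\item[(i)] Apply H\"older on the sphere to write $|f|=|f|w^{\gamma/q}\cdot w^{-\gamma/q}$, isolating a factor $\big(\sigma_t\ast w^{-\gamma p'/q}\big)^{1/p'}$.
\item[(ii)] Bound $t^{\alpha}\big(\sigma_t\ast w^{-\gamma p'/q}\big)^{1/p'}$ using the $A_s$ condition together with the reverse H\"older inequality, which lets spherical averages of $w^{-\delta}$ be compared with ball averages for $\delta$ small. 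The upper bound $\gamma\le 1-q/n$ and the lower bound $\gamma>1-q/p'$ should be exactly what makes the relevant exponents admissible.
\item[(iii)] Conclude with the weighted bound for $A_*$ on $L^{q/p\cdot}$ with weight $w^\gamma$, which requires $w^{\gamma}$ to lie in the appropriate $A$-class implied by $w\in A_s$. The formula $s=(q+2p'\gamma-p')/(p'\gamma)$ should emerge from matching these two conditions.
\end{itemize}

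\textbf{Main obstacle.} Step (ii) is the hard part: spherical averages of a negative power of an $A_s$ weight are not automatically controlled by ball averages, since the sphere has measure zero. I would try to prove a lemma saying that for $v\in A_\infty$ and small $\delta>0$, $\sup_t \sigma_t\ast v^{-\delta}$ is controlled by $M(v^{-\delta'})$. If that fails, I would fall back on factoring the estimate through the fractional integral $I_\alpha$ and the Muckenhoupt--Wheeden two-weight theory, after first reducing $A_*^\alpha$ to an operator of fractional-integral type by dyadic decomposition in $t$.
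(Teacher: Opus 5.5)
The paper gives no proof of this statement: it is quoted from Cowling--Garc\'ia-Cuerva--Gunawan and used only to derive the power-weight corollary. Judged on its own, your plan has genuine gaps, because each load-bearing step is false for spherical means.

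\emph{The Hedberg-type inequality.} No inequality $A_*^\alpha f(x)\lesssim \bigl(A_*(|f|^{s_0})(x)\bigr)^{\lambda/s_0}\|f\|_{L^p(w^{p\gamma/q})}^{1-\lambda}$ with $\lambda<1$ can hold. Take $f=\chi_{\{1-\delta<|y|<1+\delta\}}$ and $|x|<\delta/2$. The left side is at least $A_1f(x)=1$, while $A_*(|f|^{s_0})\le1$ and $\|f\|_{L^p(w^{p\gamma/q})}\to0$ as $\delta\to0$. Hedberg's trick rests on $t^\alpha\fint_{B(x,t)}|f|\le Ct^{\alpha-n/p}\|f\|_{L^p}$, which has no analogue for the singular measure $\sigma_t$.

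\emph{Steps (i)--(ii) and your lemma.} The same defect sinks (i)--(ii). After H\"older on the sphere, the factor $t^\alpha(\sigma_t*w^{-\gamma p'/q})^{1/p'}$ equals $t^\alpha$ when $w\equiv1$. The other factor $(\sigma_t*(|f|^pw^{p\gamma/q}))^{1/p}$ supplies no compensating $t^{-n/p}$. Your auxiliary lemma is also false. The weight $v(y)=\bigl||y|-1\bigr|^{a}$ with $0<a<1$ is in $A_\infty$. Since $\bigl||x-y|-1\bigr|\le|x|$ for $|y|=1$, we get $\sigma_1*v^{-\delta}(x)\ge|x|^{-a\delta}\to\infty$ as $x\to0$. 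Meanwhile $M(v^{-\delta'})$ stays bounded near the origin when $a\delta'<1$.

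\emph{Step (iii).} This step invokes a false weighted bound. The weight $|x|^a$ with $-n<a<1-n$ lies in $A_1\subset A_{r/r_0}$, yet $A_*$ is unbounded on $L^r(|x|^a)$. This is the shell example again, i.e.\ the paper's Example~1 with $p=q$. Weighted consequences of sparse bounds carry an extra reverse H\"older hypothesis. That is why the statement uses $w^\gamma$ with $\gamma\le1-q/n$ rather than an $A$-condition on the weight itself.

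\emph{Your fallback.} It does not cover the range either. The honest factorisation $t^\alpha A_t=\widetilde A_t\circ I_\alpha$ lives on the Fourier side, where $\widetilde A_t$ has multiplier $|t\xi|^\alpha\widehat{d\sigma}(t\xi)=O(|t\xi|^{\alpha-\frac{n-1}{2}})$. The Stein--Rubio de Francia maximal theorem needs decay faster than $|\xi|^{-1/2}$, i.e.\ $\alpha<\frac{n-2}{2}$, while the statement allows $\alpha$ up to $n-2$.

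The missing idea is to never put the weight on a spherical operator, and instead to interpolate in the order of the means. Use Stein's $M^\lambda_tf(x)=\Gamma(\lambda)^{-1}\int_{|y|<1}(1-|y|^2)^{\lambda-1}f(x-ty)\,dy$; here $M^0_t$ is a multiple of $A_t$ and $M^1_t$ a multiple of the ball average. Take the linearised analytic family $T_zf(x)=t(x)^{z\alpha_1}M^{(1-z)\lambda_0+z}_{t(x)}f(x)$.

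On $\Re z=0$, use Stein's unweighted $L^2$ bound for $\sup_t|M^\lambda_tf|$, valid when $\Re\lambda>1-\frac n2$. On $\Re z=1$ one has $|T_zf|\lesssim e^{c|\Im z|}\mathcal M^{\alpha_1}f$. There Muckenhoupt--Wheeden give $\mathcal M^{\alpha_1}:L^{p_1}(w^{p_1/q_1})\to L^{q_1}(w)$ for $w\in A_{1+q_1/p_1'}$, with $\alpha_1=\frac n{p_1}-\frac n{q_1}$. Only at this ball endpoint do $A_s$ arguments enter.

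Apply Stein--Weiss interpolation with change of measure at the $\theta$ where the order vanishes, and write $a=\frac{1-\theta}{2}$, so that $\lambda_0=1-\frac1{2a}$. This gives
\[
\|A_*^\alpha f\|_{L^q(w^\gamma)}\lesssim\|f\|_{L^p(w^{p\gamma/q})},\qquad \gamma=\frac{\theta q}{q_1}=1-qa,\qquad 1+\frac{q_1}{p_1'}=2-\frac{1-q/p'}{\gamma}=s.
\]
The admissible values $a\in\bigl(\frac1n,\min\{\frac1q,\frac1{p'}\}\bigr)$ correspond exactly to $\max\{0,1-\frac q{p'}\}<\gamma<1-\frac qn$. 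The hypothesis $n\ge3$ is what allows $1-\frac n2<\lambda_0<0$. The endpoint $\gamma=1-\frac qn$ follows from the openness and reverse H\"older properties of $A_s$.

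So the weight enters only through the ball endpoint, and only to the power $\gamma<1$. Your pointwise bookkeeping cannot reproduce that structure.
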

An application of their result to power-weights implies the estimate \eqref{eq:main} for a restricted range of parameters:
	\begin{corollary}\label{CGG_theorem_cor}
		Let $n\geq 3$, and suppose $\frac{n}{n-1}<p<q<n$ and $\alpha=\frac{n}{p}-\frac{n}{q}.$ Then, for every
		\[-\frac{n}{q}+1<\beta<n-1-\frac{n}{p},\]
		\eqref{eq:main} holds for $A_{*}^{\alpha}$.
	\end{corollary}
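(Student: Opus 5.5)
The corollary follows from Theorem~\ref{thm:CGG} by choosing $w(x)=|x|^{a}$ with a suitable $a$, and then checking that the admissible ranges of $\gamma$ and $a$ together cover the whole stated interval of $\beta$. We need $w^{\gamma}=|x|^{q\beta}$, and then $w^{p\gamma/q}=|x|^{p\beta}$ holds automatically. So fix $\gamma$ in the allowed range and set $a=q\beta/\gamma$.

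Recall that $|x|^{a}\in A_s$ if and only if $-n<a<n(s-1)$. Since
\[
s-1=\frac{q+p'\gamma-p'}{p'\gamma},
\]
the condition becomes
\[
-n<\frac{q\beta}{\gamma}<\frac{n(q+p'\gamma-p')}{p'\gamma}.
\]
Multiplying by $\gamma/q>0$ gives
\[
-\frac{n\gamma}{q}<\beta<\frac{n}{p'}+\frac{n\gamma}{q}-\frac{n}{q}.
\]
We must also check that $s>1$, so that $A_s$ is meaningful. This is equivalent to $\gamma>1-q/p'$, which is part of the hypothesis.

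Next we optimise over $\gamma\in(\max\{0,1-q/p'\},\,1-q/n]$. Both endpoints of the $\beta$-interval move favourably as $\gamma$ increases, so the best choice is $\gamma=1-q/n$. Then
\[
-\frac{n\gamma}{q}=1-\frac{n}{q},
\]
and
\[
\frac{n}{p'}+\frac{n\gamma}{q}-\frac{n}{q}=n-\frac{n}{p}+\frac{n}{q}-1-\frac{n}{q}=n-1-\frac{n}{p}.
\]
This is exactly the stated range $-\frac{n}{q}+1<\beta<n-1-\frac{n}{p}$. Because the same $\gamma$ works for every $\beta$ in the range, no union over $\gamma$ is needed.

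Finally we verify that $\gamma=1-q/n$ is admissible. It is positive because $q<n$. It exceeds $1-q/p'$ because $p'<n$, which follows from $p>n/(n-1)$. The only real point of care is the power-weight characterisation of $A_s$ and the bookkeeping of signs. Everything else is a direct substitution into Theorem~\ref{thm:CGG}, so I do not expect any step to be a serious obstacle.
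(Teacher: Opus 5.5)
Your argument is correct and is the direct specialisation the paper has in mind; the paper gives no separate proof and only remarks that the corollary follows by applying Theorem~\ref{thm:CGG} to power weights. Your steps are: take $w=|x|^{q\beta/\gamma}$, use $|x|^{a}\in A_s \iff -n<a<n(s-1)$, and choose the extremal $\gamma=1-\frac{q}{n}$, which is admissible exactly because $q<n$ and $p>\frac{n}{n-1}$; your computations of $s-1$ and of both endpoints of the $\beta$-range check out.
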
	
	
The state of the art from the papers \cite{CGG,DV,Lee,Rubio} is depicted in the following figure.

\begin{figure}[H]
	\centering
	\begin{subfigure}[b]{0.45\linewidth}
		\centering
		\begin{tikzpicture}[scale=4]
			\tiny
			\draw[thin,->] (0,0)node[left]{$O$} --(1.15,0) node[right]{$\frac{1}{p}$};
			\draw[thin,->] (0,0) --(0,1.2) node[left]{$\frac{1}{q}$};
			\draw[densely dotted] (0,1)node[left]{$(0,1)$}--(1,1)node[right]{$(1,1)$};
			\draw[densely dotted] (1,0)node[below]{$(1,0)$} --(1,1);
			\draw[densely dotted] (2/5,1/5)node[below right]{$R$}--(1/2,1/2)node[above]{$P$}--(0,0)--cycle;
			\draw[line width=0.9pt] (0,0) -- (1/2,1/2);
			\node at (1/2,1/2) {\tiny{$\circ$}};
			\node at (2/5,1/5) {\tiny{$\circ$}};
		\end{tikzpicture}
		\caption{$n=2$.}\label{Fig:full2_weight_before}
	\end{subfigure}
	\begin{subfigure}[b]{0.45\linewidth}
		\centering
		\begin{tikzpicture}[scale=4]
			\tiny
			\fill[lightgray!35] (1/3,1/3)--(2/3,1/3)--(2/3,2/3)--cycle;
			\draw[thin,->] (0,0)node[left]{$O$} --(1.15,0) node[right]{$\frac{1}{p}$};
			\draw[thin,->] (0,0) --(0,1.2) node[left]{$\frac{1}{q}$};
			\draw[densely dotted] (0,1)node[left]{$(0,1)$}--(1,1)node[right]{$(1,1)$};
			\draw[densely dotted] (1,0)node[below]{$(1,0)$} --(1,1);
			\draw[densely dotted] (3/5,1/5)node[below right]{$R$}--(2/3,1/3)node[right]{$Q$}--(2/3,2/3)node[above]{$P$}--(0,0)--cycle;
			\draw[densely dotted] (1/3,1/3)--(2/3,1/3);
			\draw[thin] (0,0)--(2/3,2/3);
			\draw[line width=0.9pt] (0,0) -- (1/2,1/2);
			\node at (2/3,2/3) {\tiny{$\circ$}};
			\node at (2/3,1/3) {\tiny{$\circ$}};
			\node at (3/5,1/5) {\tiny{$\circ$}};
			\node at (1/3,1/3) {\tiny{$\circ$}};
			\node[above left] at (1/3,1/3) {$C$};
			\node at (1/2,1/2) {\tiny{$\bullet$}};
			\node[above left] at (1/2,1/2) {$E$};
		\end{tikzpicture}
		\caption{$n\geq3$.}
	\end{subfigure}
	\caption{Encodes known $L^p(|x|^{p\beta})\to L^q(|x|^{q\beta})$ boundedness of $A_{*}^{\alpha}$.}
	\label{Fig:full_weight_before}
\end{figure}

Figure \ref{Fig:full_weight_before} requires some explanation, as it projects the three dimensional region depending on $(1/p,1/q,\beta)$ onto a two dimensional picture.  For $n=2$, when $p=q$ the complete range of $\beta$ is characterised.  When $p<q$, only the unweighted estimate $\beta=0$ was known.  For $n\geq3$, when $p=q$ the complete range of $\beta$ is characterised for $p \geq 2$, the segment $OE$, with up to the endpoint results for $ n/(n-1)<p<2$, the segment $PE$.  When $p<q$, the triangle $CPQ$ corresponds to the result of Cowling, Garc\'ia-Cuerva, and Gunawan, where from their result one has a partial range of $\beta$.

In particular, for $n=2$ there are no existing off-diagonal estimates aside from the known unweighted estimate, while for $n \geq 3$ the lower endpoint value of $\beta$ on $PE$, a range of exponents in the negative regime for the off-diagonal estimates in the region $CPQ$, and all weighted estimates in the quadrilateral $OCQR$ remain open.  Our main result is the following theorem.		\begin{theorem}\label{thm:full}
		Let $n\geq 2$ and $\frac{n}{n-1}< p\leq q<\infty$ with $\frac{\alpha}{n}=\frac{1}{p}-\frac{1}{q}$. Then
		\[\|A_{*}^{\alpha} f\|_{L^q(|x|^{q\beta})} \leq C \|f\|_{L^p(|x|^{p\beta})},\]
        holds for
		\begin{enumerate}
			\item $\big(\frac{1}{p},\frac{1}{q}\big)\in OPQ\cup OP\cup OQ$, if and only if $-\frac{n}{q}+\frac{1}{p}\leq \beta <n-1-\frac{n}{p}$,
			\item $\big(\frac{1}{p},\frac{1}{q}\big)\in OQR$, if and only if $-\frac{n}{q}+\frac{1}{p}\leq \beta\leq n-1+\frac{n-1}{q}-\frac{n+1}{p}$.
		\end{enumerate}
	\end{theorem}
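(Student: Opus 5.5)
We prove sufficiency by splitting space into annuli and exploiting the local structure of the spherical means, then show necessity with explicit test functions. Write $f=\sum_{j\in\Z} f_j$ with $f_j=f\chi_{\{2^j\le|x|<2^{j+1}\}}$, and split the supremum over $t$ into dyadic scales $t\sim 2^k$. By scaling ($A_t^\alpha$ commutes with dilations up to the factor dictated by $\frac{\alpha}{n}=\frac1p-\frac1q$, which is exactly compatible with the power weight $|x|^\beta$ on both sides), it suffices to understand interactions between an annulus $|x|\sim 2^l$ and $f_j$ at scale $2^k$. Since $x-ty$ with $|y|=1$ satisfies $\bigl||x|-t\bigr|\le |x-ty|\le |x|+t$, only three regimes occur.

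\emph{Local part} ($|l-j|\le 2$). Here the weights are comparable to constants $2^{j\beta}$, so the unweighted $L^p\to L^q$ bound for $A_*^\alpha$ in the region of Figure \ref{Fig:full} applies directly, after summing in $j$ via $\ell^p\subset\ell^q$. This piece imposes no condition on $\beta$.

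\emph{Far part, $|x|\gg|y|$ or $|x|\ll|y|$.} For $|l-j|>2$ we need $t\sim\max(2^l,2^j)$. Then $|A_t^\alpha f_j(x)|$ is controlled by the average of $f_j$ over the intersection of the sphere $\{|x-z|=t\}$ with the annulus $\{|z|\sim 2^j\}$. We treat the two cases separately.

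\begin{itemize}
\item[(a)] $j\ll l$. Then $t\sim 2^l$, and the sphere cuts the small annulus in a cap of measure up to $\sim 2^{j(n-1)}2^{-l(n-1)}$ in the normalised measure. The worst case is when the sphere is nearly tangent, so we majorise by a Hardy-type operator combined with a fractional "radial" maximal function in the angular variable.
\item[(b)] $j\gg l$. Then $t\sim 2^j$, and $x$ is essentially the centre, so $A^\alpha_t f(x)\approx t^\alpha$ times the spherical average of $f$ over $|z|=t$. Estimating this by Hölder on spheres gives a Hardy-type operator in the radial variable.
\end{itemize}

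The resulting discrete weighted Hardy inequalities $\sum_j 2^{(j-l)\gamma}\|f_j\|$ are summable precisely when $\beta$ lies in the stated ranges. The lower bound $\beta\ge -\frac nq+\frac1p$ (with equality allowed) comes from case (b), where the decay is geometric except at the endpoint. At the endpoint one needs a finer argument, and I would follow Lee's approach for the endpoint in the diagonal case, but with $L^p$ spherical-slice (trace/restriction to spheres, i.e. a Stein–Tomas-free mixed-norm $L^p_r L^{q}_\theta$) estimates valid for all $p>\frac{n}{n-1}$.

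\emph{Upper range.} The upper bound on $\beta$ comes from case (a) with tangency. Near-tangent spheres meet the annulus in a set of width $2^j$ and length $(2^{j}2^l)^{1/2}$, which gives the extra loss that distinguishes OQR from OPQ. In OQR I would interpolate (Stein's interpolation for weighted spaces, with the weight exponent varying linearly) between the point $R$ restricted weak-type from \cite{BCS} and the segment $OQ$. This gives the closed endpoint $\beta=n-1+\frac{n-1}{q}-\frac{n+1}{p}$ via the known endpoint restricted weak-type estimates plus real interpolation in $\beta$.

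\emph{Necessity.} For the lower bound take $f=\chi_{B(0,\delta)}$ and $|x|\sim 1$, $t=|x|$: then $A^\alpha_* f\gtrsim \delta^{n-1}$ on $|x|\sim1$, while $\|f\|\sim\delta^{\beta+n/p}$; letting $\delta\to0$ forces $\beta\ge -\frac nq+\frac1p$. For the upper bounds take $f$ the indicator of a $\delta$-neighbourhood of the unit sphere (for OPQ), or of a $\delta\times\delta^{1/2}$ cap-like set near tangency (for OQR), and test on $|x|\le\delta$ or the corresponding dual set, combined with the fact that $\beta<n-1-\frac np$ is needed even at the strict endpoint.

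The main obstacle is the endpoint $\beta=-\frac nq+\frac1p$ for $p<2$ (Lee's conjecture). There the Hardy sum does not decay, and one must gain from orthogonality across scales, presumably via a square-function/Littlewood–Paley decomposition in the radial variable together with the unweighted local smoothing bounds.
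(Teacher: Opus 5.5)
\textbf{Gap 1: the lower endpoint for $q<2$.} The central gap is the endpoint $\beta=\frac1p-\frac nq$ when $q<2$, which contains Lee's conjecture $p=q<2$. You flag it as the main obstacle but leave it at ``presumably via a square function''. Your plan to stay on the diagonal and follow Lee is exactly where Lee's argument stops. After rescaling, what is needed is $\|\widetilde{A}_{*}h\|_{L^q(B(0,2^{-100}),|x|^{-q\beta_-})}\lesssim\|h\|_{L^p}$ for $\supp h\subset A_0$, with $\beta_-=\frac nq-\frac1p$. This is not a radial Hardy operator: the spheres $\{|x-z|=t\}$ are not centred at $0$. Near $Q$, even the non-endpoint negative range (down to $\beta\approx-\frac1n$) cannot be reached from unweighted bounds by H\"older in $x$, so your case (b) is unjustified as well.

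The paper splits $\widetilde{A}_{*}\le\sum_j M_j$ in frequency and proves the off-diagonal decay bound $\|\chi_{A_k}M_jh\|_{L^q(|x|^{-q\beta_-})}\lesssim2^{-\epsilon|j+k|}\|h\ast\psi_{2^j}\|_{L^p}$ (Lemma~\ref{demposedlocalweighted}). This comes from complex interpolation of Lee's diagonal frequency-localised estimate (valid for all $p>\frac n{n-1}$) with no-decay bounds on $PQ\cup QR\cup OR$. The $PQ$ bound $\|\chi_{B(0,\varepsilon)}M_jf\|_{L^n}\lesssim\varepsilon^{1/n}\|f\|_{L^{n/(n-1)}}$ comes from the Agmon--H\"ormander estimate $\int_{\R}\int_{|x|\le R}|\mathcal{W}_tf|^2\lesssim R\|f\|_{L^2}^2$. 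Summing in $j$ uses $\sum_j\|h\ast\psi_{2^j}\|_{L^p}^q\le\|(\sum_j|h\ast\psi_{2^j}|^2)^{1/2}\|_{L^p}^q$, which holds only for $q\ge2$ (and $q\ge p$). That restriction is the real obstruction for $p<2$.

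The missing idea is to leave the diagonal. For fixed $p$, take $r>2$ with $(\frac1p,\frac1r)\in OPQR$ and prove the endpoint there. Upgrade it by weighted real interpolation to $\||x|^{-(\frac nr-\frac1p)}\chi_{B(0,2^{-100})}\widetilde{A}_{*}h\|_{L^{r,p}}\lesssim\|h\|_{L^p}$. Then return to $q<2$ by H\"older on the Lorentz scale, using $|x|^{-(\frac nq-\frac nr)}\in L^{\frac{qr}{r-q},\infty}$ and $L^{r,p}\hookrightarrow L^{r,q}$. The intermediate negative $\beta$ then follow by complex interpolation with the positive range, not from a Hardy sum.

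\textbf{Gap 2: the closed upper endpoint in $OQR$.} Interpolating between $R$ and $OQ$ cannot give $\beta_+=n-1+\frac{n-1}q-\frac{n+1}p$. The function $\beta_+$ is affine in $(\frac1p,\frac1q)$ and vanishes at $R$, where only $\beta=0$ is available, and only in restricted weak form. On $OQ$ it equals $n-1-\frac np$, which is excluded there. So every interpolated exponent stays strictly below $\beta_+$; moreover, Stein--Weiss interpolation needs strong type at both ends. The paper instead fixes the auxiliary operator $T=\sum_{m\ge100}T_m$ coupling $A_k$ with $A_{k-m}$. It proves $\|T_m\|_{L^r(|x|^{-n})\to L^s(|x|^{-n})}\lesssim2^{-m(\frac{n-1}s-\frac1r-\frac{n-1}q+\frac1p)}$ from the small-support bound for $\widetilde{A}_{*}$. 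It then sums the resulting $2^{\pm m\delta}$ bounds Bourgain-style to get $L^{r,1}(|x|^{-n})\to L^s(|x|^{-n})$ on the line $\frac{n-1}s-\frac1r=\frac{n-1}q-\frac1p$, and real-interpolates two such points.

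\textbf{Gap 3: necessity.} Your examples are swapped. Taking $f=\chi_{B(0,\delta)}$ and testing on $|x|\sim1$ gives $\delta^{n-1}\lesssim\delta^{\beta+n/p}$, i.e.\ the upper bound $\beta\le n-1-\frac np$, not the lower one. The lower bound comes from the $\delta$-shell about the unit sphere tested on $|x|<\delta/2$, which gives $\delta^{\beta+n/q}\lesssim\delta^{1/p}$. The strict inequality $\beta<n-1-\frac np$ needs a non-compact test, e.g.\ $f=\chi_{B(0,1)}$ with $t=|x|\to\infty$, where finiteness of $\int_2^\infty r^{(1-n+\alpha+\beta)q+n-1}\,dr$ is required. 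You assert this strictness without giving an example.
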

Theorem \ref{thm:full} gives a complete characterisation of the validity of the estimate for the fractional spherical maximal operator in the region for the full range of $\beta$.  In particular, it implies the known consequence of Cowling, Garc\'ia-Cuerva, and Gunawan \cite{CGG},  the diagonal estimates of Duoandikoetxea and Vega \cite{DV}, and the endpoint result of Lee \cite{Lee}.  Moreover, it gives an affirmative answer to the conjecture of Lee following his Theorem 1.1, see also  Problem 6.1 of Roos and Seeger \cite{RoosSeeger}.  

In the lacunary setting we have the following theorem.
		\begin{theorem}\label{thm:lac}
		Let $n\geq 2$ and $1< p\leq q<\infty$ with $\frac{\alpha}{n}=\frac{1}{p}-\frac{1}{q}$. If $\big(\frac{1}{p},\frac{1}{q}\big)$ lies in the interior of the region $OAB$ or the open line segment $OA$, then
		\[\|A_{lac}^{\alpha} f\|_{L^q(|x|^{q\beta})} \leq C \|f\|_{L^p(|x|^{p\beta})},\]
		if and only if $-\frac{n}{q}+\frac{1}{p}\leq \beta < n - 1 + \frac{1}{q} - \frac{n}{p}$.
	\end{theorem}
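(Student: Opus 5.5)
The proof has two parts: sufficiency for $-\frac{n}{q}+\frac1p\le\beta<n-1+\frac1q-\frac np$, and necessity via counterexamples. For sufficiency, I first reduce to nonnegative $f$. Then I decompose $\R^n$ dyadically, writing $f=\sum_j f_j$ with $f_j=f\chi_{\{2^{j}\le|x|<2^{j+1}\}}$, and similarly localise the output to annuli $\{|x|\sim 2^{l}\}$. For a fixed dyadic radius $t=2^k$, the average $A_{2^k}^\alpha f_j$ at a point with $|x|\sim 2^l$ is nonzero only when the annulus $|x-y|=2^k$ meets $\{|y|\sim 2^j\}$. This gives three regimes.

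\textbf{Regime 1: $|j-l|\le 2$.} Here the weights $|x|^{q\beta}$ and $|y|^{p\beta}$ are comparable to the constants $2^{lq\beta}$ and $2^{jp\beta}$. I apply the unweighted $L^p\to L^q$ bound for $A_{lac}^\alpha$, which holds in the interior of $OAB$ and on the open segment $OA$ (the regime pictured in Figure \ref{Fig:lac}). The homogeneity relation $\frac\alpha n=\frac1p-\frac1q$ makes the powers of $2^j$ cancel exactly. Summing over $j$ with $\ell^p\subset\ell^q$ for $p\le q$ then gives the bound.

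\textbf{Regime 2: $l\ge j+3$, output far outside the input support.} Only $2^k\sim 2^l$ contributes, so there are $O(1)$ values of $k$. A small function supported near the origin is spread over a thin spherical shell of width $2^j$ and radius $\sim 2^l$. I estimate $|A^\alpha_{2^k} f_j(x)|\lesssim 2^{l\alpha}\,2^{-l(n-1)}\,\|f_j\|_{L^1(\text{cone})}$ after averaging over directions. More precisely, I use the $L^1\to L^\infty$ bound $2^{l\alpha-l(n-1)}2^{-j}$ times the $L^1$ norm on a slab, and the $L^p\to L^p$ trivial bound on the shell, and interpolate. Computing the $L^q(|x|^{q\beta})$ norm then produces a geometric factor $2^{(j-l)(\cdots)}$. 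I expect the exponent to be positive exactly when $\beta<n-1+\frac1q-\frac np$, which gives the upper endpoint with strict inequality. The $\frac1q$ term should come from integrating over the shell of thickness $2^j$, where the output is concentrated. Summing this geometric series in $l-j$ and applying Young/Schur yields the estimate.

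\textbf{Regime 3: $j\ge l+3$, output near the origin, input far away.} Now $2^k\sim 2^j$, and $A^\alpha_{2^k}f_j(x)$ is dominated by $2^{j\alpha}$ times the average of $f_j$ over spheres centred near the origin. I bound the supremum over such spheres, uniformly for $|x|\le 2^{j-2}$, by a radial-type maximal average. For this I use polar coordinates and Hölder on each sphere, controlled by $\int|f_j|^p$ on the annulus through the lacunary maximal bound in the radial variable. Integrating $|x|^{q\beta}$ over $|x|\lesssim 2^l$ converges and sums over $l<j$ precisely when $q\beta+n>0$ with an appropriate margin. The endpoint $\beta=-\frac nq+\frac1p$ is the delicate case. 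Here a naive geometric series diverges, and I would instead sum over $l$ first: the sum $\sum_{l<j}2^{l(q\beta+n)}$ is fine, but the gain must come from the restricted-measure behaviour of the sphere near the origin. I expect this endpoint to be the main obstacle. My plan is to prove the estimate via an $\ell^{p}\to\ell^{q}$ Hardy-type inequality over $j$, exploiting $p\le q$, or to use the restricted weak-type endpoint results of \cite{BCS} combined with interpolation with change of measure (Stein–Weiss).

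\textbf{Necessity.} I test three examples. First, $f=\chi_{B(0,\delta)}$, which gives shells at radius $1$ and forces $\beta<n-1+\frac1q-\frac np$. Second, $f=\chi$ of a $\delta$-neighbourhood of the unit sphere, so that $A^\alpha_1 f\gtrsim 1$ near the origin; letting $\delta\to0$ forces $\beta\ge -\frac nq+\frac1p$. Third, I use local integrability of the weights, which excludes the remaining values.
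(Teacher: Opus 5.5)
The decisive gap is Regime 3 at the lower endpoint $\beta=\frac1p-\frac nq$, which you leave open. Moreover, the mechanism you propose there fails even away from the endpoint. A bound for $A_{2^k}f_j(x)$ that is uniform over $|x|\le 2^{j-2}$ in terms of $\|f_j\|_{L^p}$ is impossible. Polar coordinates and H\"older control only spheres centred at the origin. Concretely, if $f_j$ is the indicator of $A_j\cap\{y:\,\bigl||y-x_0|-2^j\bigr|<\delta\}$ with $|x_0|\le 2^{j-2}$, then $A_{2^j}f_j(x_0)\ge\frac12$ while $\|f_j\|_{L^p}\to0$ as $\delta\to0$; this is the mechanism behind Example 1. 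Any valid argument must integrate in $x$. At the endpoint your annulus-to-annulus bounds have exactly zero decay in $j-l$, so the Young/Schur summation breaks down, and neither fallback route you mention is carried out.

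What is missing is an estimate that is global in $x$ for each input annulus. The paper obtains it in two steps.
\begin{itemize}
\item First, Theorem \ref{thm:single average} gives the weighted single-average bound $\|A_1^\alpha f\|_{L^q(|x|^{q\beta})}\lesssim\|f\|_{L^p(|x|^{p\beta})}$ at $\beta=\frac1p-\frac nq$. It comes from interpolation with change of weights between the Duoandikoetxea--Vega diagonal estimate on $OA$ (where this $\beta$ equals $-\frac{n-1}{p}$) and the unweighted $L^{\frac{n+1}{n}}\to L^{n+1}$ bound at $B$ (where it equals $0$).
\item Second, linearise $A^\alpha_{lac}$ by a measurable $k(x)$. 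For $|x|<2^{l-100}$ the sphere $x-2^{k(x)}S^{n-1}$ meets the annulus $A_l$ only if $k(x)\in\{l,l+1\}$. Hence the contributions of the different far-away pieces $f_l$ live on the pairwise disjoint sets $\{k(x)=l\}$, and they are summed in $\ell^q$ via $\ell^p\hookrightarrow\ell^q$ with no decay required.
\end{itemize}
The full range then follows by complex interpolation between this endpoint and the positive exponents. For each sign of $\beta$, Lemma \ref{direct} disposes of the near-diagonal terms and of the off-diagonal regime in which the ratio of the weights on output and input annuli already decays geometrically, so you never need all three regimes for a single $\beta$. For $\frac1p-\frac nq<\beta<0$ alone, you could get decay $2^{(l-j)(\beta+\frac nq-\frac1p)}$ by duality from Lemma \ref{singlelocalsmallsuport}, since $\overline{OAB}$ is invariant under $(\frac1p,\frac1q)\mapsto(1-\frac1q,1-\frac1p)$; but this degenerates exactly at the endpoint.

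The other two parts also need repair.

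In Regime 2, a single spherical average has no $L^1\to L^\infty$ bound, since a surface integral is not controlled by an $L^1$ norm on a slab. The estimates you name do not produce the exponent $n-1+\frac1q-\frac np$, which you only say you ``expect''. The correct input is Lemma \ref{singlelocalsmallsuport}. For $\supp f\subset B(0,\varepsilon)$, interpolate $\|A_1f\|_{L^\infty(A_0)}\lesssim\varepsilon^{n-1}\|f\|_{L^\infty}$ with the $L^p$-improving bound for $A_1$ on $\overline{AB}$, where $n-1+\frac1q-\frac np=0$. This gives $\|A_1f\|_{L^q(A_0)}\lesssim\varepsilon^{n-1+\frac1q-\frac np}\|f\|_{L^p}$. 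After rescaling with $\varepsilon=2^{j-l}$ you get decay $2^{(j-l)(n-1+\frac1q-\frac np-\beta)}$, and Young's inequality closes the positive range exactly as in the paper.

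For necessity, your first example uses a single shell at radius $1$ with $\delta\to0$, which yields only $\beta\le n-1+\frac1q-\frac np$. Local integrability of the weights cannot exclude equality, since all the power weights involved are locally integrable there. Strictness requires all lacunary scales at once. For $f=\chi_{B(0,1)}$ one has $A^\alpha_{lac}f\gtrsim2^{k(1-n+\alpha)}$ on every shell $\{x:\,\bigl||x|-2^k\bigr|<\frac12\}$, $k\ge1$. At equality each shell contributes a constant to $\|A^\alpha_{lac}f\|_{L^q(|x|^{q\beta})}^q$, so the series diverges.
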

\noindent
Theorem \ref{thm:lac} completely characterises the validity of power-weighted estimates for the lacunary fractional spherical maximal operator.
	
The plan of the paper is as follows. In Section \ref{Prelim}, we discuss some preliminary results. In Sections \ref{single average}, \ref{lacunary} and \ref{full}, we prove power-weighted estimates for the single average, the lacunary fractional spherical maximal function (Theorem \ref{thm:lac}), and the full fractional spherical maximal function (Theorem \ref{thm:full}), respectively. In Section \ref{examples}, we give examples to obtain necessary conditions.

		\section{Preliminaries}\label{Prelim}
		In this section we collect several results that will be useful in the sequel.  First, we note that regions referred to without an overline are open regions, while those with an overline are closed.  For example,  in the lacunary case, $OAB$ refers to the open triangle, while $\overline{OAB}$ to the closed triangle.   The triangle $OAB$ does not contain the edges $OA, AB, OB$, which in turn, do not include their boundary points $O, A, B$.  This is in contrast to $\overline{OA}, \overline{AB}, \overline{OB}$, which contain the points $O, A, B$.  Second, it is useful to note that the localization of the fractional spherical maximal operator is comparable to the local spherical maximal operator, i.e.
	\begin{equation}\label{localpointwise}
		\sup_{t\in[1,2]} \big|A_t^\alpha f(x)\big|\sim \widetilde{A}_{*}f(x).
	\end{equation}

Third, we discuss some easy consequences of $L^p$-boundedness of a sublinear operator $T$ in terms of power-weight estimates. We decompose $\mathbb R^n$ into dyadic annuli
	\[A_l=\{x\in\mathbb R^n:2^l\le |x|<2^{l+1}\},\]
	and write
	\[f=\sum_{l\in\mathbb Z}f_l,\qquad f_l=f\chi_{A_l}.\]
Then we have the following lemma which will be useful in settling several easy subcases that arise in the estimates.
	\begin{lemma}\label{direct}
		Let $C\in\Z$ be fixed. Suppose that a sublinear operator $T$ is bounded from $L^p(\R^n)$ to $L^q(\R^n)$, then
		\begin{align*}
			\left\|\sum_{k\in\mathbb Z}\chi_{A_k}T\Big(\sum_{l\ge k+C}f_l\Big)\right\|_{L^q(|x|^{q\beta})}&\lesssim \|f\|_{L^p(|x|^{p\beta})}&\text{when } \beta>0,\\
			\left\|\sum_{k\in\mathbb Z}\chi_{A_k}T\Big(\sum_{l\le k+C}f_l\Big)\right\|_{L^q(|x|^{q\beta})}&\lesssim \|f\|_{L^p(|x|^{p\beta})}&\text{when } \beta<0.
		\end{align*}
	\end{lemma}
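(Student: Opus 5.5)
The idea is to use the fact that on each annulus the power weight is essentially a constant. Combined with the support restriction $l\ge k+C$ (respectively $l\le k+C$), this lets us move the weight from the output annulus $A_k$ onto the input pieces $f_l$ at the cost of a geometric factor $2^{(k-l)\beta}$, which decays in exactly the relevant direction. The unweighted $L^p\to L^q$ bound then handles the operator, and a discrete Young inequality handles the sum over scales. Throughout I assume $p\le q$, as in the setting of Theorems \ref{thm:full} and \ref{thm:lac}; this is needed in the last step.

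\emph{Case $\beta>0$.} Set $g_k=\sum_{l\ge k+C}f_l$. Since the annuli $A_k$ are disjoint and $|x|\sim 2^k$ on $A_k$,
\[
\Big\|\sum_{k}\chi_{A_k}T g_k\Big\|_{L^q(|x|^{q\beta})}^q=\sum_k\int_{A_k}|Tg_k|^q|x|^{q\beta}\,dx\lesssim \sum_k 2^{kq\beta}\|Tg_k\|_{L^q}^q\lesssim \sum_k \big(2^{k\beta}\|g_k\|_{L^p}\big)^q ,
\]
where the last step uses the assumed boundedness of $T$ from $L^p$ to $L^q$. Next, by Minkowski's inequality and since $|y|\sim 2^l$ on $A_l$,
\[
2^{k\beta}\|g_k\|_{L^p}\le \sum_{l\ge k+C}2^{k\beta}\|f_l\|_{L^p}\lesssim \sum_{l\ge k+C}2^{(k-l)\beta}\,a_l,\qquad a_l:=\big\|f_l\,|y|^{\beta}\big\|_{L^p}.
\]
The right-hand side is the discrete convolution $(K*a)_k$ with kernel $K(m)=2^{m\beta}\chi_{\{m\le -C\}}$. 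Because $\beta>0$, this kernel lies in $\ell^1(\Z)$ with norm depending only on $\beta$ and $C$.

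We then combine two inequalities:
\begin{itemize}
\item the embedding $\ell^p\subset\ell^q$, valid for $q\ge p$;
\item Young's inequality $\|K*a\|_{\ell^p}\le\|K\|_{\ell^1}\|a\|_{\ell^p}$.
\end{itemize}
Together they give
\[
\Big(\sum_k (K*a)_k^q\Big)^{1/q}\le \|K*a\|_{\ell^p}\lesssim \Big(\sum_l a_l^p\Big)^{1/p}=\|f\|_{L^p(|x|^{p\beta})}.
\]
The final equality holds because the $f_l$ have disjoint supports.

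\emph{Case $\beta<0$.} The argument is identical, now with $g_k=\sum_{l\le k+C}f_l$. The factor $2^{(k-l)\beta}$ with $k-l\ge -C$ and $\beta<0$ is again summable, since the kernel $2^{m\beta}\chi_{\{m\ge -C\}}$ lies in $\ell^1$.

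No step here is a genuine obstacle. The only points needing care are to keep the geometric factor on the correct side, so that its sign matches that of $\beta$, and to use $q\ge p$ when passing from the $\ell^q$ to the $\ell^p$ norm over $k$.
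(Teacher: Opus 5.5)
Your proof is correct and follows the paper's argument. You freeze the weight at $2^{k\beta}$ on $A_k$, apply the unweighted $L^p\to L^q$ bound, and control the resulting geometric convolution by Young's inequality with kernel $2^{m\beta}\chi_{\{m\le -C\}}$ (resp.\ $2^{m\beta}\chi_{\{m\ge -C\}}$) together with $\ell^p\hookrightarrow\ell^q$; that embedding does require the implicit assumption $p\le q$, which you rightly make explicit.

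The only differences are cosmetic. You apply the bound for $T$ to $g_k$ first and then split $g_k$ by Minkowski in $L^p$, whereas the paper splits $T(\sum_l f_l)$ by sublinearity, so your version avoids needing sublinearity over infinitely many pieces. You also apply Young in $\ell^p$ before embedding into $\ell^q$, which is the reverse of the paper's order.
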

	\begin{proof}
		We will prove the first estimate, the second follows in the same way. Since the annuli $A_k$ are disjoint and $|x|^\beta\sim 2^{k\beta}$ on $A_k$, we have
		\begin{align*}
			\left\|\sum_{k\in\mathbb Z}\chi_{A_k}T\Big(\sum_{l\ge k+C}f_l\Big)\right\|_{L^q(|x|^{q\beta})}&=\left(\sum_{k\in\Z}\left\|\chi_{A_k}T\Big(\sum_{l\ge k+C}f_{l}\Big)\right\|_{L^q(|x|^{q\beta})}^q\right)^{\frac 1q}\\
			&\lesssim \left(\sum_{k\in\Z}\left(2^{k\beta}\sum_{l\ge k+C}\Big\|Tf_{l}\Big\|_{L^q}\right)^q\right)^{\frac 1q}\\
			&\lesssim \left(\sum_{k\in\Z}\left(\sum_{l\ge k+C}2^{(k-l)\beta} \|f_{l}\|_{L^p(|x|^{p\beta})}\right)^q\right)^{\frac 1q}.
		\end{align*}    
		Define the sequences on $\Z$ by
		\begin{align*}
			F(m)=\|f_{m}\|_{L^p(|x|^{p\beta})},\qquad G(m)=2^{m\beta}\chi_{(-\infty,-C]}(m).
		\end{align*}
		When $\beta>0$, $G\in \ell_1(\Z)$ with a norm depending on $C$ and $\beta$. Also, we have $\|F\|_{\ell_p}=\|f\|_{L^p(|x|^{p\beta})}$. Hence Young's inequality on $\ell_q(\Z)$ and the $\ell_p\hookrightarrow \ell_q$ embedding give us
		\begin{align*}
			\left\|\sum_{k\in\mathbb Z}\chi_{A_k}T\Big(\sum_{l\ge k+C}f_l\Big)\right\|_{L^q(|x|^{q\beta})}&\lesssim \|F\ast G\|_{\ell_q}\\
			&\leq \|F\|_{\ell_q}\|G\|_{\ell_1}\\
			&\lesssim \|F\|_{\ell_p} = \|f\|_{L^p(|x|^{p\beta})}.
		\end{align*}
		For $\beta<0$, we use the sequence $G(m)=2^{m\beta}\chi_{[-C,\infty)}(m)$ instead and the result follows by an analogous argument.
	\end{proof}
	
	We will make repeated use of interpolation, including standard interpolations between Lebesgue spaces, complex interpolation and weighted real interpolation. We recall the following result from \cite[p. 17]{Freitag78}.
	\begin{lemma}\cite[Theorem 2]{Freitag78}\label{weightedinterpolation}
		Let $0<p_0<p_1<\infty$, $\frac{1}{p}=\frac{1-\theta}{p_0}+\frac{\theta}{p_1},\, 0<\theta<1$, and $0<q\le\infty$. For weights $w_0$ and $w_1$, we have
		\[\left(L^{p_0}(w_0,dx),L^{p_1}(w_1,dx)\right)_{\theta,q}=L^{p,q}\left(\left(\frac{w_1^{p_1}}{w_0^{p_0}}\right)^{\frac{1}{p_1-p_0}},\left(\frac{w_0}{w_1}\right)^{\frac{p_0p_1}{p_1-p_0}}dx\right).\]
\end{lemma}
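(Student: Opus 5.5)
The plan is to reduce the statement to the classical unweighted identity $(L^{p_0}(\mu),L^{p_1}(\mu))_{\theta,q}=L^{p,q}(\mu)$ for a single measure $\mu$. The reduction is a simultaneous isometric change of function and of measure. I read the notation as $\|f\|_{L^{p}(w,dx)}=\big(\int|f|^p w^p\,dx\big)^{1/p}$ and $\|f\|_{L^{p,q}(v,d\mu)}=\|fv\|_{L^{p,q}(d\mu)}$. The weights $w_0,w_1$ are assumed positive and finite almost everywhere.

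\textbf{Step 1: a common isometry.} Set
\[v=\Big(\frac{w_1^{p_1}}{w_0^{p_0}}\Big)^{\frac1{p_1-p_0}},\qquad d\mu=\Big(\frac{w_0}{w_1}\Big)^{\frac{p_0p_1}{p_1-p_0}}dx,\]
and consider the map $Mf=fv$. A direct exponent computation gives
\[\frac{w_0}{v}=\Big(\frac{w_0}{w_1}\Big)^{\frac{p_1}{p_1-p_0}},\qquad \frac{w_1}{v}=\Big(\frac{w_0}{w_1}\Big)^{\frac{p_0}{p_1-p_0}}.\]
Hence for $i=0,1$ we have $|fw_i|^{p_i}=|fv|^{p_i}(w_i/v)^{p_i}=|Mf|^{p_i}\,\frac{d\mu}{dx}$. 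So $M$ is an isometry of $L^{p_i}(w_i,dx)$ onto $L^{p_i}(\mu)$ for both $i$ at once, and $M^{-1}$ is multiplication by $1/v$.

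\textbf{Step 2: transport the interpolation space.} Since $M$ is a linear isometric isomorphism of the couple $(L^{p_0}(w_0),L^{p_1}(w_1))$ onto $(L^{p_0}(\mu),L^{p_1}(\mu))$, the $K$-functionals agree: $K(t,f;L^{p_0}(w_0),L^{p_1}(w_1))=K(t,Mf;L^{p_0}(\mu),L^{p_1}(\mu))$. Consequently
\[\big(L^{p_0}(w_0),L^{p_1}(w_1)\big)_{\theta,q}=M^{-1}\big(L^{p_0}(\mu),L^{p_1}(\mu)\big)_{\theta,q},\]
with equal quasinorms. This holds also in the quasi-Banach range $p_0<1$, since the real method is defined through $K$ alone.

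\textbf{Step 3: the unweighted identity.} It remains to show $(L^{p_0}(\mu),L^{p_1}(\mu))_{\theta,q}=L^{p,q}(\mu)$ for an arbitrary $\sigma$-finite measure $\mu$ and $0<p_0<p_1<\infty$. I would use Holmstedt's formula
\[K(t,g)\sim\Big(\int_0^{t^\alpha}(g^*)^{p_0}\Big)^{1/p_0}+t\Big(\int_{t^\alpha}^\infty (g^*)^{p_1}\Big)^{1/p_1},\qquad \frac1\alpha=\frac1{p_0}-\frac1{p_1},\]
where $g^*$ is the decreasing rearrangement with respect to $\mu$. The upper bound comes from splitting $g$ at the level $g^*(t^\alpha)$; the lower bound uses rearrangement inequalities valid for $p_i<1$.

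\textbf{Step 4: evaluate the norm and identify the obstacle.} Substitute Holmstedt's formula into $\int_0^\infty (t^{-\theta}K(t,g))^q\,dt/t$ and apply the weighted Hardy inequalities, which hold for all $0<q\le\infty$ with decreasing integrands. This yields $\int_0^\infty (s^{1/p}g^*(s))^q\,ds/s$, i.e.\ the $L^{p,q}(\mu)$ quasinorm. I expect the main obstacle to be the quasi-Banach bookkeeping in Steps 3 and 4, namely Hardy inequalities for $q<1$ and the lower bound in Holmstedt's formula when $p_0<1$. Steps 1 and 2 are purely algebraic.
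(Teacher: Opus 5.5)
Your proof is correct. The paper does not prove this lemma; it imports it from Freitag without proof. So what you give is a self-contained derivation rather than a different route from one in the text. It is the standard argument, and the shape of the conclusion, a weight $v$ paired with a new measure $\mu$, is exactly what your change of density produces.

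The exponent bookkeeping in Step~1 checks out: $(w_0/v)^{p_0}=(w_1/v)^{p_1}=(w_0/w_1)^{p_0p_1/(p_1-p_0)}$. Hence $f\mapsto vf$ is simultaneously an isometry onto $L^{p_0}(\mu)$ and onto $L^{p_1}(\mu)$. Also $\mu$ is $\sigma$-finite and non-atomic, because its density is positive and finite a.e. Step~2 is then purely formal.

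The obstacles you anticipate in Steps~3--4 are milder than you expect.
\begin{itemize}
\item The lower bound in Holmstedt's formula uses only three facts: $(f_0+f_1)^*(s_1+s_2)\le f_0^*(s_1)+f_1^*(s_2)$, H\"older's inequality with exponent $p_1/p_0>1$ on $[0,t^\alpha]$, and $f_0^*(s)\le s^{-1/p_0}\|f_0\|_{L^{p_0}(\mu)}$ on the tail. None of these is sensitive to whether $p_0<1$.
\item The Hardy inequalities in the range $q<p_0$ (resp.\ $q<p_1$) follow from a dyadic splitting. Bound the decreasing function $(g^*)^{p_0}$ (resp.\ $(g^*)^{p_1}$) on each dyadic block by its value at the left endpoint, and use $(\sum a_k)^r\le\sum a_k^r$ for $r\le 1$. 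The resulting geometric series converge precisely because $\frac1{p_0}-\frac1p>0$ and $\frac1p-\frac1{p_1}>0$.
\end{itemize}

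Finally, in the paper's only use of the lemma (the proof of \eqref{betterlocal}) one has $w_0=w_1$. Then $v=w_0$, $\mu$ is Lebesgue measure, and all exponents exceed $1$. In that case your Step~1 is trivial and the quasi-Banach issues never arise.
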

\noindent
Here for a weight $u$ and a measure $v$, we use the notation $L^{p,q}(u,v)$ to denote the space associated to the quasi-norm defined by
\begin{align*}
\|f\|_{L^{p,q}(u,v)} = \|u f\|_{L^{p,q}(v)},
\end{align*}
where the right-hand-side is a Lorentz space defined in the standard way.

	Next, we state some results for the spherical average and the local spherical maximal function when the function is supported on a small ball.
	\begin{lemma}\label{singlelocalsmallsuport}
		Let $f$ be supported on $B(0, \varepsilon)$ with $\varepsilon \ll 1$ and $A_0=B(0,2)\setminus B(0,1)$. Then
		\[\Vert A_1 f\Vert_{L^q(A_0)} \lesssim \varepsilon^{n - 1 + \frac{1}{q} - \frac{n}{p}} \Vert f\Vert_{L^p},\]
		for $(1/p, 1/q)\in \overline{OAB}$.
	\end{lemma}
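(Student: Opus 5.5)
Since $\overline{OAB}$ is the closed triangle with vertices $O=(0,0)$, $A=(1,1)$, $B=(\frac{n}{n+1},\frac{1}{n+1})$, the plan is to prove the estimate at the three vertices and then interpolate. The exponent $\gamma(p,q)=n-1+\frac1q-\frac np$ is affine in $(1/p,1/q)$, so Riesz--Thorin (complex interpolation for the fixed linear operator $f\mapsto \chi_{A_0}A_1 f$ acting on functions supported in $B(0,\varepsilon)$) transfers the vertex bounds to the whole closed triangle. More precisely, if $(1/p,1/q)=\sum\theta_j(1/p_j,1/q_j)$, the operator norm is at most $\prod_j(C\varepsilon^{\gamma_j})^{\theta_j}=C\varepsilon^{\gamma}$.

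\emph{Vertex $O$} ($p=q=\infty$, $\gamma=n-1$). For $x\in A_0$, the sphere $x-S^{n-1}$ meets $B(0,\varepsilon)$ in a cap of surface measure $\lesssim\varepsilon^{n-1}$. Hence $|A_1f(x)|\lesssim\varepsilon^{n-1}\|f\|_\infty$, and since $A_0$ has bounded measure we get $\|A_1f\|_{L^\infty(A_0)}\lesssim\varepsilon^{n-1}\|f\|_\infty$.

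\emph{Vertex $A$} ($p=q=1$, $\gamma=0$). This is immediate from $\|f*\sigma\|_1\le\|f\|_1$.

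\emph{Vertex $B$} ($p=\frac{n+1}{n}$, $q=n+1$). Here $\gamma=n-1+\frac1{n+1}-\frac{n^2}{n+1}=0$, so this is just the classical $L^{\frac{n+1}{n}}\to L^{n+1}$ estimate for the single spherical average (Littman / Strichartz, via the decay $|\hat\sigma(\xi)|\lesssim|\xi|^{-(n-1)/2}$ and complex interpolation). No gain in $\varepsilon$ is needed at this vertex.

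The only step requiring care is making sure the interpolation is legitimate with the support restriction. The class of $f$ supported in $B(0,\varepsilon)$ forms $L^p(B(0,\varepsilon))$ spaces, which interpolate exactly like $L^p$ spaces, and restriction to $A_0$ commutes with everything. So Riesz--Thorin applies directly. The exponent check is trivial by affineness: $\gamma(O)=n-1$, $\gamma(A)=0$, $\gamma(B)=0$.

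If one prefers a more hands-on argument for interior points, one could instead observe that the roles of $\varepsilon$ and the vertex estimates are captured by testing on $f=\chi_{B(0,\varepsilon)}$. However, the interpolation argument above already gives the lemma in full, and I expect no genuine obstacle beyond quoting the $L^{\frac{n+1}{n}}\to L^{n+1}$ bound at $B$.
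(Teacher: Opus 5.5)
Your proposal is correct and follows the paper's argument: the paper combines the trivial bound $\|A_1 f\|_{L^\infty(A_0)}\lesssim \varepsilon^{n-1}\|f\|_{L^\infty}$ at $O$ with the unweighted boundedness of $A_1$ on $\overline{AB}$, where the exponent $n-1+\frac1q-\frac np$ vanishes, and then interpolates. Your treatment of $\overline{AB}$ through its endpoints $A$ and $B$ together with Riesz--Thorin is simply a more explicit version of that same step.
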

	
	\begin{proof}
		First, we point out a trivial estimate due to the support of the function $f$ and $x\in A_0$. We have 
		\[\Vert A_1 f\Vert_{L^\infty(A_0)} \lesssim  \varepsilon^{n-1} \Vert f\Vert_{L^\infty}.\]
		Also, for $(1/p, 1/q)\in \overline{AB}$, the boundedness of $A_1$ implies
		\[\Vert A_1 f\Vert_{L^q(A_0)} \lesssim  \Vert f\Vert_{L^p}.\]
		Also note that $n - 1 + \frac{1}{q} - \frac{n}{p}=0$ when $(1/p, 1/q)\in \overline{AB}$. Interpolating these estimates proves the Lemma for $(1/p, 1/q)\in \overline{OAB}$. 
	\end{proof}
	
	\begin{lemma}\label{localsmallsuport}
		Let $f$ be supported on $B(0, \varepsilon)$ with $\varepsilon \ll 1$ and $A_0=B(0,2)\setminus B(0,1)$. For $(1/p, 1/q)\in OPQR\cup OP$, define $\gamma = \min\left\{ n - 1 - \frac{n}{p}, \, n - 1 + \frac{n-1}{q} - \frac{n+1}{p} \right\}$. Then for every $\eta>0$
		\[\Vert\widetilde{A}_* f\Vert_{L^q(A_0)} \lesssim \varepsilon^{\gamma-\eta} \Vert f\Vert_{L^p}.\]
	\end{lemma}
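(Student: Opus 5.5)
By analogy with Lemma \ref{singlelocalsmallsuport}, we interpolate between a trivial $L^\infty$ bound that exploits the small support and the known $L^p\to L^q$ bounds for $\widetilde{A}_*$ on the boundary of the region. The $\eta$-loss will come from having only restricted weak-type estimates at the vertices $P,Q,R$ (from \cite{BCS}). Interpolation therefore lands us in the open region, or on $OP$, at the cost of an arbitrarily small power of $\varepsilon$.

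\emph{Step 1: the endpoint $O$.} For $x\in A_0$, $t\in[1,2]$ and $f$ supported in $B(0,\varepsilon)$, the sphere $x-tS^{n-1}$ meets $B(0,\varepsilon)$ in a cap of measure $\lesssim\varepsilon^{n-1}$. Hence
\[\|\widetilde{A}_*f\|_{L^\infty(A_0)}\lesssim\varepsilon^{n-1}\|f\|_{L^\infty}.\]
At $O$ both candidate exponents defining $\gamma$ equal $n-1$, which is consistent.

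\emph{Step 2: estimates near the edges $PQ$ and $QR$.} Here I would use the sharp unweighted local maximal estimates together with Hölder's inequality on the support of $f$.
\begin{itemize}
\item Near $P$, on the diagonal with $p$ slightly larger than $n/(n-1)$, we have $\|\widetilde{A}_*f\|_{L^p}\lesssim\|f\|_{L^p}$. We should not pay Hölder here, since $\gamma=n-1-n/p\to0$ at $P$.
\item On the segment $PQ$ (the line $1/p=(n-1)/n$) the exponent $n-1-n/p$ vanishes.
\item On $QR$ the exponent $n-1+\frac{n-1}{q}-\frac{n+1}{p}$ vanishes. Indeed, $QR$ lies on the line $\frac{n+1}{p}-\frac{n-1}{q}=n-1$: at $Q$ this gives $\frac{n^2-1}{n}-\frac{n-1}{n}=n-1$, and at $R$ it gives $\frac{(n+1)(n^2-n)-(n-1)^2}{n^2+1}=n-1$.
\end{itemize}
So on the closed boundary segments $\overline{PQ}\cup\overline{QR}$ (for $n=2$, just $\overline{PR}$) the required power of $\varepsilon$ is $0$. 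We thus need $\|\widetilde{A}_*f\|_{L^q(A_0)}\lesssim\|f\|_{L^p}$ there, which is known in the interior of these edges (Schlag, Schlag–Sogge) and in restricted weak form at the vertices \cite{BCS}. To avoid the vertices, for a target $(1/p,1/q)$ I will take boundary points $(1/p_1,1/q_1)$ on the open edges, close to the vertex if necessary, where strong type holds.

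\emph{Step 3: interpolation.} Fix $(1/p,1/q)\in OPQR\cup OP$ and write it as $(1-\theta)\cdot O+\theta\cdot X$, with $X$ on $\overline{PQ}\cup\overline{QR}$ the ray-extension from $O$. The function $\gamma$ is the minimum of two linear functions, each vanishing on its respective edge. On the ray through $X$ the relevant linear function is exactly $(1-\theta)(n-1)$, and interpolation (Riesz–Thorin for the linearised sublinear operator, with $t(x)$ measurable) between Steps 1 and 2 gives $\varepsilon^{(1-\theta)(n-1)}=\varepsilon^{\gamma}$. One must check that $\gamma$ picks the correct linear function on each sub-triangle: $OPQ$ uses $n-1-n/p$ and $OQR$ uses the other, which matches since the two expressions agree exactly on the line $OQ$.

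\emph{The main obstacle} is the case where $X$ is a vertex $P,Q,R$ (or where $X=P$ for points on $OP$), where only restricted weak type is available. I would handle it by perturbing: replace $X$ by a nearby strong-type point $X'$ on an open edge, with a corresponding power $\varepsilon^{\gamma(X')}$ where $|\gamma(X')|\le\eta'$. Using Step 1 together with Hölder, the power then degrades by at most $\eta$. Alternatively, one can interpolate real-wise using the restricted weak type at the vertex, and then Hölder on $A_0$ and on $B(0,\varepsilon)$ to pass from Lorentz to Lebesgue spaces at the cost of $\varepsilon^{-\eta}$. For $(1/p,1/q)$ on $OP$, interpolating $O$ with diagonal points $p_1\downarrow n/(n-1)$ gives the power $n-1-n/p$ minus an arbitrarily small loss, which is where the $\eta$ is genuinely needed.
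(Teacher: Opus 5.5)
Your scheme is the paper's. You use the trivial bound $\|\widetilde{A}_*f\|_{L^\infty(A_0)}\lesssim\varepsilon^{n-1}\|f\|_{L^\infty}$ at $O$ and interpolate along rays from $O$ against $\varepsilon$-independent bounds on the far edges. Your bookkeeping for $\gamma$ is also correct: the two exponents vanish on $PQ$ and $QR$ respectively and agree on $OQ$.

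\textbf{The gap is in Step 2 for $n\ge 3$.} On the open edge $PQ$, where $p=\frac{n}{n-1}$, there is \emph{no} strong-type bound $L^p\to L^q(A_0)$ for any $q$, even for functions supported in $B(0,\varepsilon)$. Take $f(y)=|y|^{1-n}\bigl(\log\frac{1}{|y|}\bigr)^{-1}\chi_{\{|y|<\varepsilon\}}$. Then $\|f\|_{L^{n/(n-1)}}^{n/(n-1)}\sim\int_0^\varepsilon\frac{dr}{r(\log(1/r))^{n/(n-1)}}<\infty$. But for $x\in A_0$ the sphere of radius $|x|$ about $x$ passes through the origin, so $\widetilde{A}_*f(x)\ge A_{|x|}f(x)\gtrsim\int_0^\varepsilon\frac{dr}{r\log(1/r)}=\infty$. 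Schlag and Schlag--Sogge only give strong type in the open region, not on this edge. Hence the Riesz--Thorin step with $X\in PQ$ is unavailable, and your argument breaks down for every target in the open triangle $OPQ$. Your perturbation paragraph does not cover this case: it only treats vertices, and it moves $X$ to another point ``on an open edge''.

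\textbf{The repair} is the tool you mention only as a fallback at vertices. On all of $\overline{PQ}$, and likewise on $QR$, $\widetilde{A}_*$ is of restricted weak type $L^{p_1,1}\to L^{q_1,\infty}$: Bourgain gives this at $P$, and Lee's theorems give it on $PQ$, $QR$ and at $Q$. Real interpolation for the sublinear operator between this and the $L^\infty$ bound handles $(\frac1p,\frac1q)=\theta(\frac1{p_1},\frac1{q_1})$. It gives a bound $L^p\to L^{q,p}(A_0)\subset L^q(A_0)$ (using $p\le q$) with norm $\lesssim\varepsilon^{(1-\theta)(n-1)}=\varepsilon^{\gamma}$. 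No H\"older step and no perturbation are needed.

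This is exactly the paper's proof. It shows the $\eta$-loss is needed only for $n=2$ on $OP$, where there is no endpoint estimate at $P=(\frac12,\frac12)$; there your diagonal interpolation is what the paper does. The loss-free form is recorded in Remark \ref{rem:dim2} and used later in Lemma \ref{lem:upper endpoint}.

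If you prefer to stay with strong type, take $X'$ on the segment $OX$ inside the open region, where strong type holds with constant independent of $\varepsilon$. Riesz--Thorin then loses at most $\gamma(X')$, which tends to $0$ as $X'\to X$. That is acceptable for the statement as posed.
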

	\begin{proof}
		Due to the support of the function $f$ and $x\in A_0$, we have 
		\[\Vert\widetilde{A}_* f\Vert_{L^\infty(A_0)} \lesssim  \varepsilon^{n-1} \Vert f\Vert_{L^\infty}.\]
		Also, for $(1/p, 1/q)\in PQ \bigcup QR\cup Q$, the restricted weak-type boundedness of $\widetilde{A}_*$ (see \cite[Theorems 1.1 and 1.4]{Lee1}) asserts
		\[\Vert\widetilde{A}_* f\Vert_{L^{q,\infty}(A_0)} \lesssim  \Vert f\Vert_{L^{p,1}}.\]
		Note that $\gamma=0$ when $(1/p, 1/q)\in PQ\bigcup QR$. Applying real interpolation between these estimates proves the Lemma for $(1/p, 1/q)\in OPQR$.  For $n\geq 3$, the fact that one has a restricted weak-type at $P$ (see \cite{Bourgain0}) yields the conclusion for the line segment $OP$ by the same argument.  When $n=2$, one instead interpolates with a strong-type estimate for a point on $OP$ arbitrarily close to $P$ to obtain the claimed result.
	\end{proof}
	\begin{remark}\label{rem:dim2}
	As one observes from the proof, the loss of $\eta>0$ only appears for $n=2$ on the segment $OP$.
\end{remark}
		
	We next consider the dual situation in which the local spherical maximal function is restricted on a small ball. Let $\psi \in \mathscr{S}(\mathbb{R}^n)$ such that $\supp(\widehat{\psi})\subset B(0,4)\setminus B(0,1/2)$ and
	\begin{equation}\label{identity}
		\sum_{j\in\Z} \widehat{\psi}_{2^j}(\xi)=1, \quad \xi\neq 0,
	\end{equation}
	where $\psi_t(x)=t^n \psi(tx)$.  Further define $\widetilde{\psi}_{2^{j}} = \psi_{2^{j-1}} +\psi_{2^{j}} +\psi_{2^{j+1}}$.
	
	Using the partition of unity from \eqref{identity}, we have the following decomposition of $\widetilde{A}_{*}$ :
	\begin{equation}\label{eq:deco}
		\widetilde{A}_{*}f(x)\leq \sup_{t\in[1,2]}\left|f\ast\sum_{j\leq0}\psi_{2^{j}}\ast\sigma_{t}(x)\right|+\sum_{j\geq1}\sup_{t\in[1,2]}\big|f\ast\psi_{2^{j}}\ast\sigma_{t}(x)\big|=:\sum_{j\geq0}M_{j}f(x),
	\end{equation}
	where
    \[M_0f(x)=\sup_{t\in[1,2]}\left|f\ast\sum_{j\leq0}\psi_{2^{j}}\ast\sigma_{t}(x)\right|,\]
    and for $j\geq1$,
	\[M_{j}f(x)=\sup_{t\in[1,2]}\big|f\ast\psi_{2^{j}}\ast\sigma_{t}(x)\big|.\]

	\begin{lemma}[\cite{BCS}, Lemma 4.1]\label{Pointwise:full}
		Let $n\geq2$ and $0\leq\alpha\leq n$. Then for $j\geq 1$, we have
		\[M_{j}f(x)\lesssim 2^j\mathcal{M}^{\alpha}f(x).\]
	\end{lemma}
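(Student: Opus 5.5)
The statement to prove is Lemma~\ref{Pointwise:full}: for $j\ge1$, $M_jf(x)\lesssim 2^j\mathcal M^\alpha f(x)$. Here $\mathcal M^\alpha$ is presumably the fractional Hardy--Littlewood maximal function
\[\mathcal M^\alpha f(x)=\sup_{r>0} r^{\alpha}\fint_{B(x,r)}|f|,\]
though for $t\in[1,2]$ any normalisation works. The plan is to bound the kernel $\psi_{2^j}\ast\sigma_t$ pointwise by a radially decreasing integrable majorant. Then we sum over dyadic shells.

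\emph{Step 1: kernel bound.} Since $\psi\in\mathscr S$, for every $N$
\[|\psi_{2^j}(y)|\lesssim_N 2^{jn}(1+2^j|y|)^{-N}.\]
Convolving with $\sigma_t$ for $t\in[1,2]$, we need $\int_{S^{n-1}}2^{jn}(1+2^j|x-ty|)^{-N}\,d\sigma(y)$. Write $d(x)=\bigl||x|-t\bigr|$ for the distance from $x$ to the sphere of radius $t$. The portion of the sphere within distance $\rho\ge d(x)$ of $x$ has $\sigma$-measure $\lesssim\rho^{n-1}$ when $\rho\le 1$. Decomposing the sphere into the shells $|x-ty|\sim 2^{-j+m}$ then gives
\[|\psi_{2^j}\ast\sigma_t(x)|\lesssim_N 2^{j}\,(1+2^j d(x))^{-N}\ \text{ for } |x|\le 4,\qquad |\psi_{2^j}\ast\sigma_t(x)|\lesssim_N 2^{jn}(2^j|x|)^{-N}\ \text{ for } |x|>4.\]
The factor $2^{j}$ comes from $2^{jn}\cdot 2^{-j(n-1)}$.

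\emph{Step 2: domination by averages.} Using Step 1, split $|f|\ast|\psi_{2^j}\ast\sigma_t|(x)$ into three pieces.
\begin{itemize}
\item The region $\{z: d(x-z)\le 2^{-j}\}$. This is contained in $B(x,3)$ and has measure $\sim2^{-j}$. Bounding its contribution by $2^j\int_{B(x,3)}|f|$ gives $\lesssim 2^j\cdot 3^{n-\alpha}\cdot 3^{\alpha-n}\int_{B(x,3)}|f|\lesssim 2^j\mathcal M^\alpha f(x)$.
\item The shells $\{z: d(x-z)\sim 2^{-j+m}\}$ inside $B(x,4)$. These carry the weight $2^{j}2^{-mN}$, so summing over $m\ge1$ gives the same bound $\lesssim 2^j\mathcal M^\alpha f(x)$.
\item The tail $|x-z|>4$, decomposed dyadically as $|x-z|\sim 2^{k}$ with $k\ge 2$. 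Each shell contributes at most
\[2^{jn}2^{-jN}2^{-kN}\int_{B(x,2^{k+1})}|f|\lesssim 2^{j(n-N)}2^{-k(N-n+\alpha)}\,\mathcal M^\alpha f(x).\]
Choosing $N>n$, this sums to a quantity far better than $2^j\mathcal M^\alpha f(x)$.
\end{itemize}
In each piece we use $r^{\alpha-n}\int_{B(x,r)}|f|\le C\mathcal M^\alpha f(x)$ with $r\gtrsim1$, so the factor $r^{n-\alpha}$ is harmless for bounded $r$ and is absorbed by the decay for large $r$.

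\emph{Step 3.} All bounds are uniform in $t\in[1,2]$, so taking the supremum over $t$ gives the lemma. The main obstacle is Step 1, specifically the precise surface-measure estimate near the sphere when $x$ lies close to the origin or the sphere is nearly tangent. If the direct geometric count proves messy, I would instead use $\sigma_t\ast\psi_{2^j}\le \sigma_t\ast\Phi_{2^j}$ with $\Phi$ a radial decreasing majorant. Writing $\Phi$ as a superposition of indicator functions of balls reduces the problem to $\sigma_t\ast\chi_{B(0,r)}/r^n\lesssim r^{-1}\chi_{\{d\le r\}}$ for $r\le1$, which is elementary.
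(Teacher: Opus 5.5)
The paper does not prove this lemma; it quotes it from \cite{BCS} (Lemma 4.1 there), so there is no in-text argument to compare against. Your proposal is correct, and it is the standard kernel-majorant argument one would expect behind the cited result.

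The key steps all check out:
\begin{itemize}
\item The cap bound $\sigma(\{y:|z-ty|\le\rho\})\lesssim \min(\rho,1)^{n-1}$, which is nonzero only when $\rho\ge \bigl||z|-t\bigr|$, gives $\sup_{t\in[1,2]}\sup_z|\psi_{2^j}\ast\sigma_t(z)|\lesssim 2^j$.
\item The trivial bound $|z-ty|\ge |z|/2$ for $|z|>4$ gives rapid decay off $B(0,4)$.
\item You only ever average $|f|$ over balls of radius $r\ge 3$. So the factor $r^{n-\alpha}$ is bounded near the sphere and is absorbed by the decay in the tail. This is exactly why the whole range $0\le\alpha\le n$ is admissible. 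At $\alpha=n$ it gives the bound $2^j\|f\|_{L^1}$ that the paper uses in Lemma~\ref{localized estimate}.
\end{itemize}

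Two small remarks. First, the refined profile $2^j(1+2^jd)^{-N}$ and the shell decomposition inside $B(x,4)$ are not needed. The uniform bound $2^j$ on $B(0,4)$ already gives $2^j\int_{B(x,4)}|f|\lesssim 2^j\mathcal{M}^\alpha f(x)$. So the ``main obstacle'' you flag reduces to the cap estimate alone. Second, in Step 1 you should say explicitly that shells with $2^{m-j}>1$ have $\sigma$-measure $\lesssim 1$ rather than $\rho^{n-1}$. This is harmless, since their weight is $2^{jn-mN}$.

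It is also worth noting that the spatial route is the right choice here. Bounding the kernel in $L^\infty$ via the $L^1$ norm of its multiplier would only give $2^{j(n+1)/2}$, which is worse than $2^j$ for $n\ge 2$.
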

	\noindent
	Here we use the notation
	\begin{align*}
	\mathcal{M}^{\alpha}f(x):= \sup_{r>0} r^\alpha \fint_{B(x,r)} |f(y)|\;dy
	\end{align*}
	to denote the fractional maximal function of $f$.

	The following lemma tells us that we may consider the averaging operator to control the maximal function.
	\begin{lemma}[\cite{Lee1}, Lemma 1.3]\label{sobo}
		Let $1< p\leq \infty$ and $F : \R^n\times [1,2]\to \R$ be a smooth function. Then,
		\[\left\|\sup_{t\in[1,2]}|F(x,t)|\right\|_{L^p}\lesssim \| F\|_{L^p(\R^n\times[1,2])}+\| F\|_{L^p(\R^n\times[1,2])}^{\frac{1}{p'}}\|\partial_t F\|_{L^p(\R^n\times[1,2])}^{\frac{1}{p}}.\]
	\end{lemma}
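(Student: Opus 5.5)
The statement to be proved is Lemma~\ref{sobo}: for $1<p\le\infty$ and smooth $F$ on $\R^n\times[1,2]$, the $L^p$ norm of $\sup_{t\in[1,2]}|F(x,t)|$ is controlled by $\|F\|_{L^p(\R^n\times[1,2])}$ plus $\|F\|_{L^p(\R^n\times[1,2])}^{1/p'}\|\partial_t F\|_{L^p(\R^n\times[1,2])}^{1/p}$. The plan is to prove it pointwise in $x$ via the fundamental theorem of calculus applied to $|F|^p$, then integrate in $x$ and apply Hölder.

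\textbf{Case $p<\infty$.} Fix $x$ and write $G(t)=|F(x,t)|^p$.

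1. For any $s,t\in[1,2]$ we have
\[
G(t)\le G(s)+\int_1^2 |\partial_\tau G(\tau)|\,d\tau.
\]

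2. Since $p>1$, the function $|F|^p$ is $C^1$ in $t$ wherever $F$ is smooth (rigorously, one can use $(|F|^2+\epsilon^2)^{p/2}$ and let $\epsilon\to0$). This gives
\[
|\partial_\tau G|\le p\,|F(x,\tau)|^{p-1}\,|\partial_\tau F(x,\tau)|.
\]

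3. Averaging step 1 over $s\in[1,2]$ yields
\[
\sup_{t\in[1,2]}|F(x,t)|^p\le\int_1^2|F(x,s)|^p\,ds+p\int_1^2|F(x,\tau)|^{p-1}|\partial_\tau F(x,\tau)|\,d\tau .
\]

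4. Integrate in $x$. The first term gives $\|F\|_{L^p(\R^n\times[1,2])}^p$. For the second, apply Hölder on $\R^n\times[1,2]$ with exponents $p'$ and $p$, using that $\big\||F|^{p-1}\big\|_{p'}=\|F\|_p^{p-1}$. This bounds it by
\[
p\,\|F\|_{L^p}^{p-1}\,\|\partial_tF\|_{L^p}.
\]

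5. Take $p$-th roots and use $(a+b)^{1/p}\le a^{1/p}+b^{1/p}$. The second term becomes $\|F\|^{(p-1)/p}\|\partial_tF\|^{1/p}=\|F\|^{1/p'}\|\partial_tF\|^{1/p}$, which is the claimed bound.

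\textbf{Case $p=\infty$.} Here the statement is immediate, since $\sup_t|F(x,t)|\le\|F\|_{L^\infty(\R^n\times[1,2])}$. This is consistent with the exponents $1/p'=1$ and $1/p=0$.

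\textbf{Main obstacle.} The only delicate point is justifying the differentiation of $|F|^p$ where $F$ vanishes. The $\epsilon$-regularization in step 2 handles this, or alternatively one can apply the fundamental theorem of calculus directly to the absolutely continuous function $t\mapsto|F(x,t)|^p$. No earlier result from the paper is needed.
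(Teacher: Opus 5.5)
Your argument is correct. The paper itself gives no proof and simply imports the lemma from \cite{Lee1}; your route is the standard proof of this one-dimensional Sobolev-type embedding: apply the fundamental theorem of calculus to $|F(x,\cdot)|^p$, average over the base point $s\in[1,2]$, then use H\"older in $(x,t)$ with $\big\||F|^{p-1}\big\|_{p'}=\|F\|_p^{p-1}$.

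Two remarks. Your argument uses only $C^1$ regularity in $t$ for each fixed $x$, and it works verbatim for complex-valued $F$, since $z\mapsto|z|^p$ is already $C^1$ for $p>1$ and so the $\epsilon$-regularization is not even needed. Both points matter for the application in Lemma~\ref{localized estimate}, where $F=\chi_{B(0,\varepsilon)}\,f\ast\psi_{2^j}\ast\sigma_t$ is complex-valued and not smooth in $x$.
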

	
	Recall the Fourier multiplier representation of the spherical average
	\[A_t f(x) = \int_{\R^n} \widehat{f}(\xi) \widehat{d\sigma}(t\xi) e^{ix\cdot\xi} \; d\xi,\]
	where $\widehat{d\sigma}$ is the Fourier transform of the surface measure on the unit sphere in $\R^n$ given by
	\[\widehat{d\sigma}(\xi) = \frac{J_{\frac{n-2}{2}}(|\xi|)}{|\xi|^{\frac{n-2}{2}}}.\]
	Here, $J_{\frac{n-2}{2}}$ denotes the standard Bessel function of order $\frac{n-2}{2}$. Using the asymptotics of the Bessel function (see page 338 of  \cite{book-Stein}), we can write
	\begin{equation}\label{measure}
		\widehat{d\sigma}(\xi) = \sum_{\pm, 0\le m\le N} a_m^\pm e^{\pm i|\xi|}|\xi|^{-\frac{n-1}{2}-m}+ E_N(|\xi|),\qquad |\xi|\geq1,
	\end{equation}
	for some constants $a_m^\pm$, and $E_N$ is a smooth function satisfying
	\[\bigg|\Big(\frac{d}{dr}\Big)^\vartheta E_N(r)\bigg|\lesssim r^{-\big(\frac{n+1+2N}{2}+\gamma\big)}, \qquad 0\le \vartheta \le \frac{N+1}{2}.\]
	By choosing $N > 0$ sufficiently large, the error term $E_N$ gives a negligible contribution. Furthermore, in view of the expansion \eqref{measure}, every term with $m\geq1$ has additional decay. Thus it is enough to consider the leading oscillatory multiplier $e^{i|\xi|}|\xi|^{-\frac{n-1}{2}}$. The term with the negative sign can be handled identically.
	
	\begin{lemma}\label{localized estimate}
		Let $j\ge 1$ and $0< \varepsilon \le 1$. Then 
		\[\|\chi_{B(0,\varepsilon)}M_{j}f\|_{L^n} \lesssim \varepsilon^{1/n}\|f\|_{L^{\frac{n}{n-1}}}\]
		holds with a constant independent of $j$ and $\varepsilon$.
	\end{lemma}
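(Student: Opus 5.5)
The plan is to interpolate between a crude estimate that sees the size of the ball and a sharp uniform-in-$j$ estimate at the point $Q$. I would combine three ingredients for the single-scale operator $M_j$, whose kernel is $\psi_{2^j}\ast\sigma_t$, $t\in[1,2]$, a smoothed sphere of thickness $2^{-j}$:
\begin{enumerate}
\item an $L^1\to L^\infty$ bound, $\|M_jf\|_{L^\infty}\lesssim 2^{j}\|f\|_{L^1}$, coming from $\|\psi_{2^j}\ast\sigma_t\|_{L^\infty}\lesssim 2^{j}$;
\item an $L^2$ bound, $\|M_jf\|_{L^2}\lesssim 2^{-j\frac{n-2}{2}}\|f\|_{L^2}$, obtained from Lemma \ref{sobo} together with the decay $|\widehat{d\sigma}(t\xi)|\lesssim 2^{-j\frac{n-1}{2}}$ and $|\partial_t\widehat{d\sigma}(t\xi)|\lesssim 2^{j}2^{-j\frac{n-1}{2}}$ on $\supp\widehat\psi_{2^j}$;
\item an $L^{\frac{n}{n-1}}\to L^n$ bound uniform in $j$.
\end{enumerate}
For the third ingredient, the most efficient route is to reduce, via the expansion \eqref{measure}, to the leading multiplier $e^{it|\xi|}|\xi|^{-\frac{n-1}{2}}\widehat\psi_{2^j}(\xi)$. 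I would then apply Lemma \ref{sobo} in $t$ to this half-wave piece and use the $L^{p}\to L^{p'}$ dispersive/Stein--Tomas estimate for $e^{it\sqrt{-\Delta}}$ at frequency $2^j$. At $p=\frac{n}{n-1}$ the powers of $2^j$ should cancel exactly, which reflects the fact that $Q$ lies on the boundary of the type set of $\widetilde A_*$.

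With these bounds, I would split into two regimes according to the size of $2^{-j}$ relative to $\varepsilon$.

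\emph{Case $2^{j}\le \varepsilon^{-1}$.} Since $\widehat{M_j}$ is localised to frequencies $\sim 2^j$, the function $M_jf$ is essentially constant at scale $2^{-j}\ge\varepsilon$. I would make this precise by dominating $\sup_t|f\ast\psi_{2^j}\ast\sigma_t|$ through the rapidly decaying tails of $\psi_{2^j}$, in a Bernstein-type argument. This gives
\[
\|\chi_{B(0,\varepsilon)}M_jf\|_{L^n}\lesssim \varepsilon\,\sup_{B(0,\varepsilon)}|M_jf|\lesssim \varepsilon\, 2^{j}\,\|f\|_{L^{\frac{n}{n-1}}},
\]
where the second step interpolates (1) with the $L^{\frac n{n-1}}\to L^\infty$ bound obtained from Hölder on the kernel, $\|\psi_{2^j}\ast\sigma_t\|_{L^n}\lesssim 2^{j\frac{n-1}{n}}$. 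Refining this with the sharper kernel bound $\|\psi_{2^j}\ast\sigma_t\|_{L^n}\sim 2^{j(1-\frac1n)}$ should give $\varepsilon\,2^{j(1-\frac1n)}\le\varepsilon^{1/n}$, which is exactly the claim in this range.

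\emph{Case $2^{j}>\varepsilon^{-1}$.} Here the ball is larger than the wavelength. I would apply Hölder on $B(0,\varepsilon)$,
\[
\|\chi_{B(0,\varepsilon)}M_jf\|_{L^n}\lesssim \varepsilon^{1-\frac nr}\|M_jf\|_{L^r},
\]
and control $\|M_jf\|_{L^r}$ from $L^{\frac n{n-1}}$ by interpolating (3) with (1), which gives a bound $2^{jb(r)}$. The aim is to choose $r$ so that $\varepsilon^{1-\frac nr}2^{jb(r)}\lesssim\varepsilon^{1/n}$ whenever $2^j\varepsilon>1$, which requires $b(r)\le0$ with $1-\frac nr\ge\frac1n$.

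I expect this to be the main obstacle. The point $\big(\frac{n-1}{n},\frac{n-1}{n^2}\big)$ needed for a pure Hölder gain lies beyond $R$, so no $j$-uniform bound there can hold in general. What I would try first is a spatial localisation. For $x\in B(0,\varepsilon)$ and $t\in[1,2]$, the value $M_jf(x)$ only depends, up to Schwartz tails, on $f$ restricted to the annulus $\{1-\varepsilon\lesssim|y|\lesssim 2+\varepsilon\}$. One can then use a $TT^*$ argument for the operator $f\mapsto \chi_{B(0,\varepsilon)}\,e^{it\sqrt{-\Delta}}P_jf$. The kernel of $TT^*$ is supported in $|x-x'|\le2\varepsilon$ with size $2^{j\frac{n+1}{2}}(1+2^j|x-x'|)^{-\frac{n-1}{2}}$ after the $t$-integration. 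Estimating it by Young's inequality on a set of diameter $\varepsilon$ should produce precisely the factor $\varepsilon^{1/n}$ uniformly in $j$, once the Sobolev-in-$t$ loss from Lemma \ref{sobo} is balanced against the $2^{-j/n}$ gain coming from the localisation.
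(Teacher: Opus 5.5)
\paragraph{Where the proposal breaks.} Your case $2^j\le\varepsilon^{-1}$ is correct, and it is simpler than you make it. H\"older's inequality on the kernel gives directly
\[
\sup_x M_jf(x)\le\sup_{t\in[1,2]}\|\psi_{2^j}\ast\sigma_t\|_{L^n}\|f\|_{L^{\frac{n}{n-1}}}\lesssim 2^{j(1-\frac1n)}\|f\|_{L^{\frac{n}{n-1}}},
\]
hence $\varepsilon\,2^{j(1-\frac1n)}\le\varepsilon^{1/n}$, with no ``essentially constant'' argument. Your ingredient (3) is also not a separate input: interpolating (1) and (2) already gives the constant $\big(2^{-j\frac{n-2}{2}}\big)^{2/n}\big(2^{j}\big)^{1-\frac2n}=1$.

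The genuine gap is the case $2^j\varepsilon>1$, which is where the lemma has content. The $TT^*$ argument you sketch bounds the $t$-integrated kernel in absolute value and then applies Young's inequality on a set of diameter $\varepsilon$. This cannot produce the gain. Take $Tg(x,t)=\chi_{B(0,\varepsilon)}(x)\,e^{it\sqrt{-\Delta}}P_jg(x)$ on $\R^n\times[1,2]$, and let $K$ be the kernel of $TT^*$. Any estimate that only uses a majorant of $|K|$ is bounded below by $|B(0,\varepsilon)\times[1,2]|^{-1}\iint|K|$: test the positive operator on the indicator of $B(0,\varepsilon)\times[1,2]$. The kernel $\mathcal K^j_s$ of $e^{is\sqrt{-\Delta}}P_j^2$ satisfies $\int_{-1}^{1}|\mathcal K^j_s(z)|\,ds\approx 2^{j(n-1)}(1+2^j|z|)^{-\frac{n-1}{2}}$ for $|z|\le 2\varepsilon$. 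So this lower bound is $\approx 2^{j\frac{n-1}{2}}\varepsilon^{\frac{n+1}{2}}$, whereas $\|TT^*\|_{L^2\to L^2}\lesssim\varepsilon$. You therefore lose $(2^j\varepsilon)^{\frac{n-1}{2}}$ exactly in the regime you need. The gain comes from cancellation (orthogonality) in $t$, which absolute values destroy. Your closing sentence about balancing the Sobolev-in-$t$ loss against a $2^{-j/n}$ gain does not supply an argument.

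\paragraph{The missing ingredient.} What you need is a local $L^2$ estimate for the half-wave propagator, uniform in frequency:
\[
\int_{\R}\int_{|x|\le R}|\mathcal W_tg(x)|^2\,dx\,dt\lesssim R\,\|g\|_{L^2}^2 .
\]
The paper proves this by Plancherel in $t$, which reduces the left side to $\int_0^\infty\int_{|x|\le rR}|\widehat{g_r\,d\sigma}(x)|^2\,dx\,r^{n-2}\,dr$ with $g_r=\widehat g(r\,\cdot)$. It then applies the Agmon--H\"ormander inequality $\int_{|x|\le\rho}|\widehat{h\,d\sigma}|^2\lesssim\rho\,\|h\|_{L^2(S^{n-1})}^2$. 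Combined with Lemma~\ref{sobo}, this gives $\|\chi_{B(0,\varepsilon)}M_jf\|_{L^2}\lesssim 2^{-j\frac{n-2}{2}}\varepsilon^{1/2}\|f\|_{L^2}$. Interpolating with your bound (1) yields exactly $\big(2^{-j\frac{n-2}{2}}\varepsilon^{1/2}\big)^{2/n}\big(2^j\big)^{1-\frac2n}=\varepsilon^{1/n}$ for every $j$, so no case distinction is needed.

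\paragraph{Repairing your localisation remark.} If you prefer to build on your own observation about annuli, it must be sharpened to time windows. Fix an interval $J$ of length $\varepsilon$ centred at $t_J$, and set $\mathcal A_J=\{y:\big||y|-t_J\big|\le C\varepsilon\}$. For $x\in B(0,\varepsilon)$ and $t\in J$, the value $e^{it\sqrt{-\Delta}}P_jf(x)$ depends, up to Schwartz tails, only on $f\chi_{\mathcal A_J}$. Unitarity gives
\[
\int_J\|e^{it\sqrt{-\Delta}}P_j(f\chi_{\mathcal A_J})\|_{L^2}^2\,dt\le\varepsilon\|f\chi_{\mathcal A_J}\|_{L^2}^2 ,
\]
and bounded overlap of the $\mathcal A_J$ then gives the same $\varepsilon^{1/2}$ gain when $2^j\varepsilon\ge1$. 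Either way, this frequency-uniform $L^2$ gain is the key idea of the lemma, and it is absent from your proposal.
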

	\begin{proof}
		Applying Lemma~\ref{sobo} with $p=2$ gives
		\begin{align}\label{localaverage}
			\|\chi_{B(0,\varepsilon)}M_{j}f\|_{L^2}\lesssim&\|\chi_{B(0,\varepsilon)}|f\ast\psi_{2^j}\ast\sigma_t|\|_{L^2(\R^n\times[1,2])}\nonumber\\
			&+\|\chi_{B(0,\varepsilon)}|f\ast\psi_{2^j}\ast\sigma_t|\|_{L^2(\R^n\times[1,2])}^{1/2}\|\chi_{B(0,\varepsilon)} |\partial_t(f\ast\psi_{2^j}\ast\sigma_t)|\|_{L^2(\R^n\times[1,2])}^{1/2}.
		\end{align}
		On the support of $\widehat{\psi}_{2^j}$, we have $|\xi|\sim 2^j$. Hence the multiplier of the operator $2^{-j}\partial_t(f\ast\psi_{2^j}\ast\sigma_t)$ behaves like the multiplier of $f\ast\psi_{2^j}\ast\sigma_t$. Thus, by \eqref{measure} and the discussion thereafter, it is enough to estimate
		\[2^{-j\frac{n-2}{2}}\|\chi_{B(0,\varepsilon)}\mathcal W_t f\|_{L^2(\R^n\times[1,2])},\]
		where $\mathcal W_t$ is the wave propagator given by
		\[\mathcal W_tg(x):=\int_{\R^n}e^{i(x\cdot\xi+t|\xi|)}\widehat g(\xi)\,d\xi.\]
		We claim that
		\begin{equation}\label{wave-local}
			\int_{\R}\int_{|x|\leq R}|\mathcal W_tf(x)|^2\,dx\,dt \lesssim R\|f\|_{L^2}^2,\qquad R>0.
		\end{equation}
		If we assume the claim, then  by \eqref{measure} and the discussion thereafter, we obtain
		\begin{equation}\label{L2-L2}
			\|\chi_{B(0,\varepsilon)}M_{j}f\|_{L^2}\lesssim 2^{-j\frac{n-2}{2}}\varepsilon^{1/2}\|f\|_{L^2}.
		\end{equation}
		On the other hand, Lemma~\ref{Pointwise:full} with $\alpha=n$ gives us 
		\begin{equation}\label{L1-Linfty}
			\|\chi_{B(0,\varepsilon)}M_{j}f\|_{L^\infty}\lesssim 2^{j}\|f\|_{L^1}.
		\end{equation}
		Interpolation between \eqref{L2-L2} and \eqref{L1-Linfty} yields
		\[\|\chi_{B(0,\varepsilon)}M_{j}f\|_{L^n} \lesssim \varepsilon^{1/n}\|f\|_{L^{\frac{n}{n-1}}}.\]
		This completes the proof of Lemma~\ref{localized estimate} except for the estimate \eqref{wave-local}.
		
		Writing the frequency integral of $\mathcal W_t$ in polar coordinates, we get
		\[\mathcal{W}_t f(x) = \int_0^\infty e^{itr} \int_{\mathbb S^{n-1}} e^{ix \cdot (r\omega)} \widehat{f}(r\omega) r^{n-1} \, d\sigma(\omega) dr.\]
		Let us define a function $H_x(r)$ for $r \ge 0$ by
		\[H_x(r) := \int_{\mathbb S^{n-1}} e^{irx \cdot \omega} \widehat{f}(r\omega) r^{n-1} \, d\sigma(\omega),\]
		and extend $H_x(r) = 0$ for $r < 0$. We can see $\mathcal{W}_t f(x)$ as the 1-dimensional inverse Fourier transform of $H_x(r)$ evaluated at time $t$ (up to a constant multiple of $2\pi$). 
		
		Applying Plancherel's theorem in the time variable $t$, we obtain
		\begin{align*}
			\int_{\R} |\mathcal{W}_t f(x)|^2 \, dt &= 2\pi \int_{\R} |H_x(r)|^2 \, dr \\
			&= 2\pi \int_0^\infty \bigg| \int_{\mathbb S^{n-1}} e^{irx \cdot \omega} \widehat{f}(r\omega) r^{n-1} \, d\sigma(\omega) \bigg|^2 dr
		\end{align*}
		Now, integrating over $|x| \le R$ and using Fubini's theorem, we get
		\begin{align*}
			\int_{|x| \le R} \int_{\R} |\mathcal{W}_t f(x)|^2 \, dt \, dx &= 2\pi \int_0^\infty \int_{|x| \le R} \bigg| \int_{\mathbb S^{n-1}} e^{irx \cdot \omega} \widehat{f}(r\omega) \, d\sigma(\omega) \bigg|^2 dx\, r^{2n-2} \, dr\\
			&= 2\pi \int_0^\infty \int_{|x| \le rR} \bigg| \int_{\mathbb S^{n-1}} e^{ix \cdot \omega} \widehat{f}(r\omega) \, d\sigma(\omega) \bigg|^2 dx\, r^{n-2}\,dr\\
			&= 2\pi \int_0^\infty \int_{|x| \le rR} |\widehat{f_r d\sigma}(x) |^2 dx\, r^{n-2}\,dr,
		\end{align*}
		where we applied a change of variables $rx\to x$ in the second equality and wrote $f_r(\omega) := \widehat{f}(r\omega)$.
		
		Invoking the classical Agmon-H\"ormander estimate for the sphere \cite[Theorem 2.1]{AH}, we get
		\begin{align*}
			\int_{|x| \le R} \int_{\R} |\mathcal{W}_t f(x)|^2 \, dt \, dx &\lesssim R \int_0^\infty \|f_r\|_{L^2(\mathbb S^{n-1})}^2 \, r^{n-1}\,dr\\
			&= R \int_0^\infty \int_{\mathbb S^{n-1}}|\widehat{f}(r\omega)|^2\,d\sigma(\omega)\, r^{n-1}\,dr\\
			&= R\|f\|^2_{L^2}.
		\end{align*}
		This proves the claim \eqref{wave-local}.
	\end{proof}
	We now prove a power-weighted estimate for the frequency localised operator $M_j$.
	\begin{lemma}\label{demposedlocalweighted}
		Let $k<0$ and $j\geq1$. Suppose $\operatorname{supp}(f)\subset A_0$ and fix $\beta_-=\frac{n}{q}-\frac{1}{p}$. For $\big(\frac{1}{p},\frac{1}{q}\big)\in OPQR \cup OP$, there exists $\epsilon>0$ such that
		\begin{equation*}
			\|\chi_{A_k}M_{j}f\|_{L^q(|x|^{-q\beta_-})}\lesssim 2^{-\epsilon|j+k|}\|f\ast\psi_{2^j}\|_{L^p}.
		\end{equation*}
	\end{lemma}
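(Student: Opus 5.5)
The strategy is to prove, for each vertex of a small family of exponent pairs $(1/p,1/q)$, an estimate of the form
\[
2^{-k\beta_-}\,\|\chi_{A_k}M_jf\|_{L^q}\lesssim 2^{a(j+k)}\,\|f\ast\psi_{2^j}\|_{L^p},
\]
with an exponent $a$ depending on the vertex. We then interpolate (complex interpolation for the linearised operator $f\mapsto \chi_{A_k}\,f\ast\psi_{2^j}\ast\sigma_{t(x)}$). Two remarks make this set-up legitimate. First, since $\widetilde\psi_{2^j}\ast\psi_{2^j}=\psi_{2^j}$, we may write $M_jf(x)=\sup_{t\in[1,2]}|(f\ast\psi_{2^j})\ast\widetilde\psi_{2^j}\ast\sigma_t(x)|$, so every bound can be stated in terms of $g=f\ast\psi_{2^j}$. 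Second, on $A_k$ the weight $|x|^{-q\beta_-}$ is comparable to $2^{-kq\beta_-}$, and $\beta_-=n/q-1/p$ is affine in $(1/p,1/q)$. Hence the weighted bound at an interpolated point has exponent equal to the corresponding convex combination of the vertex exponents. It then suffices to treat $j+k<0$ and $j+k\ge0$ separately, and in each case exhibit vertices whose convex hull contains the given point, with strictly positive weight on a vertex of the right sign.

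\emph{Case $j+k<0$.} Here $A_k$ is tiny compared with the wavelength $2^{-j}$. For $g\in L^p$ we use Hölder on $A_k$ together with the kernel bound $\|\widetilde\psi_{2^j}\ast\sigma_t\|_{L^{p'}}\lesssim 2^{j/p}$. This kernel is a bump of height $2^j$ on a $2^{-j}$-neighbourhood of the sphere. We obtain
\[
\|\chi_{A_k}M_jf\|_{L^q}\lesssim 2^{kn/q}\,2^{j/p}\,\|g\|_{L^p}.
\]
Multiplying by $2^{-k\beta_-}$ gives $2^{(j+k)/p}$, which is already decay whenever $p<\infty$, that is, away from $O$. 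Near $O$ we interpolate this with the same Hölder/kernel bound at a point with $p<\infty$. Since this estimate holds at every $(1/p,1/q)$, no genuine interpolation is needed in this case: one simply takes $\epsilon=1/p>0$ on the region considered.

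\emph{Case $j+k\ge0$.} This is the main obstacle: we need genuine oscillatory decay, and the natural endpoint estimates are only neutral. The vertices I would use are the following.
\begin{enumerate}
\item[(i)] Lemma \ref{localized estimate} at $Q=(\frac{n-1}{n},\frac1n)$. Here $\beta_-=1/n$, so $2^{-k/n}\cdot 2^{k/n}$ gives exponent $a=0$.
\item[(ii)] The local $L^2$ estimate \eqref{L2-L2} with $\varepsilon=2^{k+1}$. Here $\beta_-=\frac{n-1}{2}$, so $2^{-k\frac{n-1}{2}}\,2^{k/2}\,2^{-j\frac{n-2}{2}}=2^{-\frac{n-2}{2}(j+k)}$, i.e.\ $a=-\frac{n-2}{2}<0$ when $n\ge3$.
\item[(iii)] The trivial $L^\infty\to L^\infty$ bound at $O$, with $\beta_-=0$ and $a=0$.
\end{enumerate}
For $n\ge3$, every point of $OPQR\cup OP$ that lies in the closed triangle spanned by $O$, $Q$ and $(\frac12,\frac12)$, and is not on the edge $OQ$, gives a strictly negative exponent. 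For the remaining points (the part toward $P$ and $R$, and all of $n=2$) I would replace (ii) by an estimate with $j$-decay taken from local smoothing/Lee's bounds for $\widetilde A_*$. Concretely, $\|M_j\|_{L^p\to L^r}\lesssim 2^{-j\delta}$ for $(1/p,1/r)$ in the interior of $OPQR$. Combined with Hölder on $A_k$, this gives $2^{-k\beta_-}2^{kn(1/q-1/r)}2^{-j\delta}$. Choosing $r$ slightly larger than $q$ so that the $k$-exponent matches, the result becomes a negative multiple of $(j+k)$.

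The delicate point is checking that this choice stays admissible near the edges $PQ$ and $QR$, where $\delta\to0$. This is exactly where we would spend the effort, and if necessary we would also interpolate with the restricted weak-type endpoints and the $\eta$-loss bounds of Lemma \ref{localsmallsuport}. Finally we set $\epsilon$ to be the minimum of the exponents obtained in the two cases.
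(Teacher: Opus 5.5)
\textbf{There is a genuine gap: the case $j+k\ge0$ is proved only on a small triangle, and your proposed fix for the rest cannot work.} What you do have is correct. For $j+k<0$, H\"older with $\|\widetilde\psi_{2^j}\ast\sigma_t\|_{L^{p'}}\lesssim 2^{j/p}$ gives $2^{(j+k)/p}$ directly. For $n\ge3$, interpolating $O$, $Q$ and $(\frac12,\frac12)$ gives decay on the open triangle they span.

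The case $j+k\ge0$ is left open on the rest of $OPQR\cup OP$. This includes the diagonal from $(\frac12,\frac12)$ to $P$, the open segment $OQ$ (which is interior to $OPQR$), and all of $OQR$. For $n=2$ it is everything, since then $(\frac12,\frac12)=P$ and \eqref{L2-L2} carries no decay.

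Your fix fails for the following reason. After H\"older from $L^q(A_k)$ to $L^r$ and dropping $\chi_{A_k}$, you must bound $2^{-k(n/r-1/p)}\|M_j\|_{L^p\to L^r}$. At $k=-j$ this equals $2^{j(n/r-1/p)}\|M_j\|_{L^p\to L^r}$. Example 1 at scale $2^{-j}$ shows $\|M_j\|_{L^p\to L^r}\gtrsim 2^{-j(n/r-1/p)}$: take a shell of width $2^{-j}$ about a sphere in $A_0$ and observe it on $|x|\lesssim 2^{-j}$.

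So ``choosing $r$ so that the exponents match'' is not a free choice. It demands the sharp global rate $\delta=n/r-1/p$ at a point strictly above $OR$. Interpolating the known bounds for $M_j$ reaches that rate only on or below $OR$. Near $PQ$ the sharp rate is actually false, since a $2^{-j}$-ball gives $\|M_j\|_{L^p\to L^r}\gtrsim 2^{-j(n-1-n/p)}$. With any smaller $\delta$ you get growth $2^{|k|(n/r-1/p-\delta)}$ along $j=-k$. The gain near $j+k\approx 0$ has to come from the localisation to $A_k$, which a global operator norm throws away. Lemma \ref{localsmallsuport} concerns the opposite configuration (small support, output on $A_0$), so it does not help either.

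\textbf{The missing ingredient is Lee's diagonal estimate \cite[Proposition 1.2]{Lee}.} For every $p=q>\frac{n}{n-1}$ it gives the bound $2^{-\delta|j+k|}$. That is a decaying vertex at each point of $OP$, valid for both signs of $j+k$, and it is the deep input. With it you only need bounds uniform in $j,k$ on $PQ\cup QR\cup OR$:
\begin{itemize}
\item On $PQ$, your H\"older argument from the $L^n$ bound of Lemma \ref{localized estimate} works for every $q\le n$, not only at $Q$.
\item On $OR$ one has $\beta_-=0$ and $M_jf=\widetilde A_*(f\ast\psi_{2^j})$, so Lee's unweighted bound for $\widetilde A_*$ suffices.
\item On $QR$, use real interpolation between $Q$ and $R$.
\end{itemize}
Every point of $OPQR$ lies on a segment joining a diagonal point to one of these boundary points, with positive weight on the diagonal end. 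Complex interpolation then gives the lemma directly, with no case split. This is the paper's argument.
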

	\begin{proof}
		When $p=q>\frac{n}{n-1}$, the lemma was proved in \cite[Proposition 1.2]{Lee}. We have
		\begin{equation}\label{demposedweightedL2}
			\|\chi_{A_k}M_{j}f\|_{L^p(|x|^{-p\beta_-})}\lesssim 2^{-\delta|j+k|}\|f\ast\psi_{2^j}\|_{L^p},
		\end{equation}
		for some $\delta=\delta(n,p)>0$.
		
		If we show that for $\left(\frac{1}{p},\frac{1}{q}\right)\in PQ\cup QR\cup OR$, we have
		\begin{equation}\label{demposedweightedPQRO}
			\|\chi_{A_k}M_{j}f\|_{L^q(|x|^{-q\beta_-})}\lesssim \|f\ast\psi_{2^j}\|_{L^p}.
		\end{equation}
		Then, applying complex interpolation between \eqref{demposedweightedL2} and \eqref{demposedweightedPQRO} to the map
\[
g \mapsto \chi_{A_k} \sup_{t \in [1,2]} | g \ast \widetilde{\psi}_{2^j} \ast \sigma_t|
\]
acting on $g = f\ast \psi_{2^j}$ proves the Lemma.
		
		On the line segment $OR$, we have $\beta_-=0$. Thus,  unweighted estimates of $\tilde{A_*}$ from \cite[Theorem 1.4]{Lee1} gives us \eqref{demposedweightedPQRO}.
		
		For the line segment $PQ$, from Lemma~\ref{localized estimate} along with H\"older's inequality, we obtain
		\begin{align*}
			\|\chi_{A_k}M_{j}f\|_{L^q(|x|^{-q\beta_-})} &\lesssim 2^{-k\beta_-}|A_k|^{1/q-1/n} \|\chi_{B(0,2^{k+1})}M_{j}f\|_{L^n}\\
			&\lesssim 2^{-k\beta_-} 2^{k(n/q-1)} 2^{k/n}\|f\ast\psi_{2^j}\|_{L^{\frac{n}{n-1}}}\\
			&= 2^{k(-\beta_- + n/q - 1 + 1/n)}\|f\ast\psi_{2^j}\|_{L^{\frac{n}{n-1}}}\\
			&=\|f\ast\psi_{2^j}\|_{L^{\frac{n}{n-1}}}.
		\end{align*}
		Real interpolation of the estimates at $Q$ and $R$ proves \eqref{demposedweightedPQRO} for the line segment $QR$.
	\end{proof}
	
Finally, we end this section by proving a weighted boundedness estimate which will be useful to prove positive endpoint for range of $\beta$ in the boundedness region OQR.  Let $\left(\frac{1}{p},\frac{1}{q}\right)\in OQR$, and set $\alpha=\frac{n}{p}-\frac{n}{q}$ and $\beta_{+}= n-1+\frac{n-1}{q}-\frac{n+1}{p}$. 

For $m\geq 100$, let us define the positive sublinear operators
\begin{align}
\nonumber T_mg(x)=&|x|^{\beta_++\frac{n}{q}}\sum_{k\in\Z}\chi_{A_k}(x)
 A^{\alpha}_{*}\big(|\cdot|^{-\beta_+-\frac{n}{p}}|g|\chi_{A_{k-m}}\big)(x),\\
 \text{and} \qquad  Tg=&\sum_{m\geq 100}T_mg.\label{eq:T opeartor}
\end{align}

\begin{lemma}\label{lem:upper endpoint}
 Let $\left(\frac{1}{p},\frac{1}{q}\right)\in OQR$ and $T$ be the operator defined in \eqref{eq:T opeartor}. Then for any $\left(\frac{1}{r},\frac{1}{s}\right)\in OQR$ with $\frac{n-1}{s}-\frac{1}{r}=\frac{n-1}{q}-\frac{1}{p}$, we have 
 \begin{align*}
     \|Tg\|_{L^{s}(|x|^{-n})} \lesssim \|g\|_{L^{r,1}(|x|^{-n})}.
 \end{align*}
\end{lemma}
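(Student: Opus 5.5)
The plan is to reduce each piece $T_m$ to the local estimate of Lemma~\ref{localsmallsuport} by scaling, obtain an $L^{r}(|x|^{-n})\to L^{s}(|x|^{-n})$ bound for $T_m$ whose dependence on $m$ is an explicit power $2^{m c(r,s)}$, and then sum in $m$ with Bourgain's interpolation trick.

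\emph{Step 1: bound for a single $T_m$.} Write $d\mu=|x|^{-n}dx$, so each annulus $A_k$ has $\mu(A_k)\sim1$. Fix $k$ and set $f=|\cdot|^{-\beta_+-\frac np}|g|\chi_{A_{k-m}}$. For $x\in A_k$ and $y\in\supp f$ we have $|x-y|\sim2^k$, so only radii $t\sim 2^k$ contribute to $A_*^\alpha f(x)$. By \eqref{localpointwise}, $\chi_{A_k}A^\alpha_*f\lesssim 2^{k\alpha}\,\chi_{A_k}\widetilde A_*$ applied to $f$ rescaled by $2^k$ (using a bounded number of unit-scale pieces if needed). The rescaled function is supported in $B(0,C2^{-m})$, so Lemma~\ref{localsmallsuport} applies with $\varepsilon\sim2^{-m}$. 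For $(\frac1r,\frac1s)\in OQR$ the minimum defining $\gamma$ is $\gamma(r,s)=n-1+\frac{n-1}{s}-\frac{n+1}{r}$, and by Remark~\ref{rem:dim2} there is no $\eta$-loss there. Undoing the scaling gives
\[
\|\chi_{A_k}A^\alpha_* f\|_{L^s}\lesssim 2^{k\alpha}2^{kn/s}2^{-kn/r}2^{-m\gamma(r,s)}\|f\|_{L^r}.
\]
Next insert $|x|^{\beta_++\frac nq}\sim2^{k(\beta_++\frac nq)}$ on $A_k$, the factor $2^{-kn/s}$ from $\mu$ on $A_k$, and
\[
\|f\|_{L^r}\sim2^{-(k-m)(\beta_++\frac np)}\,2^{(k-m)\frac nr}\,\|g\chi_{A_{k-m}}\|_{L^r(\mu)}.
\]
The total power of $2^k$ is $\frac nq+\alpha-\frac np=0$. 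The power of $2^m$ is
\[
\beta_++\frac np-\frac nr-\gamma(r,s)=\Big(\frac{n-1}{q}-\frac1p\Big)-\Big(\frac{n-1}{s}-\frac1r\Big)=:c(r,s).
\]
The annuli $A_k$ are disjoint and $r\le s$, so $\ell^r\hookrightarrow\ell^s$ lets us sum over $k$, giving
\[
\|T_mg\|_{L^s(\mu)}\lesssim 2^{m\,c(r,s)}\|g\|_{L^r(\mu)}\qquad\text{for all }\Big(\tfrac1r,\tfrac1s\Big)\in OQR.
\]

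\emph{Step 2: summing in $m$.} On the line of the statement $c=0$, so Step 1 gives no decay and the sum over $m$ cannot be done directly; this is the main obstacle. Since $OQR$ is open, given $(\frac1r,\frac1s)$ on that line we can choose $(\frac1{r_0},\frac1{s_0})$ and $(\frac1{r_1},\frac1{s_1})$ in $OQR$ on either side of it, with $c(r_0,s_0)=\epsilon_0>0$, $c(r_1,s_1)=-\epsilon_1<0$, and $(\frac1r,\frac1s)$ on the segment joining them. Bourgain's interpolation trick (summing operators whose norms are $2^{m\epsilon_0}$ and $2^{-m\epsilon_1}$) then yields the restricted weak-type bound
\[
T:\ L^{r,1}(\mu)\to L^{s,\infty}(\mu)
\]
at every point of the line inside $OQR$.

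\emph{Step 3: upgrading to the strong bound.} Take two points on the same line inside $OQR$ on either side of $(\frac1r,\frac1s)$. Since $r\le s$ there, the off-diagonal Marcinkiewicz real interpolation theorem upgrades the two restricted weak-type bounds to
\[
L^{r,1}(\mu)\to L^{s,1}(\mu)\subset L^s(\mu),
\]
which is the claimed estimate.

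The points I expect to need care are, first, checking that the scaling in Step 1 is legitimate uniformly in $k$, i.e.\ that $A^\alpha_*$ restricted to $A_k$ really reduces to a single-scale local maximal function. Second, one must check that $OQR$ contains points with $c$ of both signs near every point of the line, which holds because $OQR$ is open.
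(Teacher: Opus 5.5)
Your proposal is correct and follows the paper's proof. Step 1 is exactly the paper's rescaling argument, giving $\|T_m g\|_{L^s(|x|^{-n})}\lesssim 2^{m c(r,s)}\|g\|_{L^r(|x|^{-n})}$ on $OQR$, and Step 2 is the same Bourgain-type summation in $m$. The one difference is that the paper perturbs only $1/r$ by $\pm\delta$ and keeps $s$ fixed. Both bounds then land in the same strong space $L^s(|x|^{-n})$, so the triangle inequality gives the restricted strong-type bound $\|T\chi_E\|_{L^s(|x|^{-n})}\lesssim\mu(E)^{1/r}$ directly, which is then extended to $L^{r,1}$ using positivity of $T$. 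This makes your Step 3 (interpolating restricted weak-type bounds along the line) unnecessary, although your route yields the slightly stronger $L^{r,1}\to L^{s,1}$ estimate.
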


\begin{proof}
    We note that for any $m\geq 100$, due to support of $|g|\chi_{A_{k-m}}$ we can reduce the full fractional maximal operator to the local spherical maximal operator as 
    \begin{align*}
        \chi_{A_k}(x)
 A^{\alpha}_{*}\big(|\cdot|^{-\beta_+-\frac{n}{p}}|g|\chi_{A_{k-m}}\big)(x)&=\chi_{A_k}(x)
 \sup_{2^{k-1}\le t\le 2^{k+2}}A^{\alpha}_{t}\big(|\cdot|^{-\beta_+-\frac{n}{p}}|g|\chi_{A_{k-m}}\big)(x).
    \end{align*}
For any function $f$, we can rescale the above maximal function and write
		\begin{align}
			\sup_{t\in [2^{k-1},2^{k+2}]}|A_t^\alpha f(x)|\le& 2^{(k-1)\alpha}\widetilde{A}_{*}\big((f)_{2^{k-1}}\big)(2^{-k+1}x)+2^{k\alpha}\widetilde{A}_{*}\big((f)_{2^k}\big)(2^{-k}x)\nonumber\\
            &+2^{(k+1)\alpha}\widetilde{A}_{*}\big((f)_{2^{k+1}}\big)(2^{-k-1}x),\label{eq:rescaling}
		\end{align}
        where we use the notation $(f)_{s}=f(sx)$.

    By a change of variables in \eqref{eq:rescaling} and using Lemma \ref{localsmallsuport} along with Remark \ref{rem:dim2}, we obtain
    \begin{align*}
        &\left\||\cdot|^{\beta_++\frac{n}{q}}\chi_{A_k}\sup_{2^{k+j}\le t\le 2^{k+j+1}}A^{\alpha}_{t}\big(|\cdot|^{-\beta_+-\frac{n}{p}}|g|\chi_{A_{k-m}}\big)\right\|_{L^s(|x|^{-n})}\\
        &\lesssim 2^{k\big(\beta_++\frac{n}{q}-\frac{n}{s}\big)}2^{k\big(\alpha+\frac{n}{s}-\frac nr\big)}2^{-m\big(n-1+\frac{n-1}{s}-\frac{n+1}{r}\big)}\||\cdot|^{-\beta_+-\frac{n}{p}}|g|\chi_{A_{k-m}}\|_{L^r}\\
        &\lesssim 2^{k\big(\beta_++\frac{n}{q}+\alpha-\frac nr\big)}2^{-m\big(n-1+\frac{n-1}{s}-\frac{n+1}{r}\big)}2^{-(k-m)\big(\beta_++\frac np-\frac nr\big)}\||g|\chi_{A_{k-m}}\|_{L^r(|x|^{-n})}\\
        &= 2^{-m\big(\frac{n-1}{s}-\frac{1}{r}-\frac{n-1}{q}+\frac{1}{p}\big)}\||g|\chi_{A_{k-m}}\|_{L^r(|x|^{-n})}.
    \end{align*}
    Using the above estimate and the embedding $\ell_r\hookrightarrow\ell_s$, we obtain
    \begin{align}
        \|T_mg\|_{L^s(|x|^{-n})}&= \left(\sum_{k\in\Z}\left\||\cdot|^{\beta_++\frac{n}{q}}\chi_{A_k}\sup_{2^{k+j}\le t\le 2^{k+j+1}}A^{\alpha}_{t}\big(|\cdot|^{-\beta_+-\frac{n}{p}}|g|\chi_{A_{k-m}}\big)\right\|_{L^s(|x|^{-n})}^s\right)^{\frac 1s}\nonumber\\
        &\lesssim \left(\sum_{k\in\Z}2^{-ms\big(\frac{n-1}{s}-\frac{1}{r}-\frac{n-1}{q}+\frac{1}{p}\big)}\||g|\chi_{A_{k-m}}\|_{L^r(|x|^{-n})}^s\right)^{\frac 1s}\nonumber\\
        &\leq 2^{-m\big(\frac{n-1}{s}-\frac{1}{r}-\frac{n-1}{q}+\frac{1}{p}\big)}\left(\sum_{k\in\Z}\||g|\chi_{A_{k-m}}\|_{L^r(|x|^{-n})}^r\right)^{\frac 1r}\nonumber\\
        &= 2^{-m\big(\frac{n-1}{s}-\frac{1}{r}-\frac{n-1}{q}+\frac{1}{p}\big)}\|g\|_{L^r(|x|^{-n})}.\label{eq:Tm}
    \end{align}

    Let us fix $\left(\frac 1r,\frac 1s\right)\in OQR$ satisfying $\frac{n-1}{s}-\frac{1}{r}=\frac{n-1}{q}-\frac{1}{p}$. Note that we do not obtain any decay or growth in the above $L^r\to L^s$ estimate of the operators $T_m$. However, for any such $\left(\frac 1r,\frac 1s\right)$ there exists $\delta>0$ such that the points $\left(\frac{1}{r_1},\frac 1s\right)=\left(\frac 1r-\delta,\frac 1s\right)$ and $\left(\frac{1}{r_2},\frac 1s\right)=\left(\frac 1r+\delta,\frac 1s\right)$ lie inside $OQR$. For these points, from \eqref{eq:Tm}, we get
    \begin{align}
        \|T_mg\|_{L^s(|x|^{-n})}&\lesssim 2^{m\delta}\|g\|_{L^{r_1}(|x|^{-n})},\label{eq:Tm+}\\
        \|T_mg\|_{L^s(|x|^{-n})}&\lesssim 2^{-m\delta}\|g\|_{L^{r_2}(|x|^{-n})}.\label{eq:Tm-}
    \end{align}
    If we take $g=\chi_E$ for a finite measurable set $E$, then from \eqref{eq:Tm+} and \eqref{eq:Tm-}, we get
    \begin{align*}
        \|T\chi_E\|_{L^s(|x|^{-n})}&\lesssim \sum_{m\geq 100}\min\{2^{m\delta}\mu(E)^{\frac 1r-\delta},2^{-m\delta}\mu(E)^{\frac 1r+\delta}\}\\
        &\lesssim \mu(E)^{\frac 1r},
    \end{align*}
    where the measure $\mu$ is given by $d\mu=|x|^{-n}\,dx$. Using a standard argument, one can extend the the above estimate for any $g\in L^{r,1}(\mu)$, i.e.
    \[\|Tg\|_{L^s(|x|^{-n})}\lesssim \|g\|_{L^{r,1}(|x|^{-n})}.\]
    This completes the proof of the lemma.
\end{proof}

	\section{Single Average}\label{single average}
	\begin{theorem}\label{thm:single average}
		Let $n\geq 2$ and $1< p\leq q<\infty$ with $\frac{\alpha}{n}=\frac{1}{p}-\frac{1}{q}$. If $\big(\frac{1}{p},\frac{1}{q}\big)$ lies in the interior of the region $OAB$ or the open line segment $OA$, then
		\[\|A_{t}^{\alpha} f\|_{L^q(|x|^{q\beta})} \leq C \|f\|_{L^p(|x|^{p\beta})},\]
		if and only if $-\frac{n}{q}+\frac{1}{p}\leq \beta \leq n - 1 + \frac{1}{q} - \frac{n}{p}$.
	\end{theorem}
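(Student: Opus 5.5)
By scaling invariance ($A_t^\alpha$ with $\alpha/n=1/p-1/q$ commutes with dilations in a way that preserves the power-weighted norms), it suffices to treat $t=1$. We decompose $f=\sum_l f_l$ and the output space into the annuli $A_k$. Since $\sigma_1$ is supported on the unit sphere, $\chi_{A_k}A_1 f_l$ vanishes unless either $|k-l|\le 2$, or $k\le 0$ and $l\in\{-1,0,1\}$ (small output near the origin, input near the unit sphere), or $l\le 0$ and $k\in\{-1,0,1\}$ (small input near the origin, output near the unit sphere). We split accordingly into three pieces.

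\emph{Diagonal piece $|k-l|\le 2$.} Here $|x|^\beta\sim 2^{k\beta}\sim |y|^\beta$, so the weights cancel. The unweighted $L^p\to L^q$ boundedness of $A_1$ on $\overline{OAB}$ (minus the excluded endpoints) then gives $\|\chi_{A_k}A_1 f_l\|_{L^q(|x|^{q\beta})}\lesssim \|f_l\|_{L^p(|x|^{p\beta})}$. Summing in $k$ uses $\ell^p\hookrightarrow\ell^q$ and finite overlap. This step places no restriction on $\beta$.

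\emph{Input near the origin ($l\le 0$, $k\approx 0$).} By Lemma~\ref{singlelocalsmallsuport} with $\varepsilon=2^{l+1}$, we get
\[
\|A_1 f_l\|_{L^q(A_0)}\lesssim 2^{l(n-1+\frac1q-\frac np)}\|f_l\|_{L^p}\sim 2^{l(n-1+\frac1q-\frac np-\beta)}\|f_l\|_{L^p(|x|^{p\beta})}.
\]
Summing over $l\le 0$ with Hölder in $\ell^p$ converges when $\beta<n-1+\frac1q-\frac np$. The endpoint $\beta=n-1+\frac1q-\frac np$ gives no decay, so there we instead estimate the whole piece directly. Write it as $A_1(f\chi_{B(0,1)})$ restricted to $A_0$, and bound $|A_1 g(x)|$ for $|x|\sim1$ by an integral over $\{y:|x-y|=1\}$ near the origin. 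We aim to use a fixed-$x$ Hölder bound $|A_1(g\chi_{B(0,1)})(x)|\lesssim \|g|\cdot|^{\beta}\|_{L^p}\,(\int_{B(0,1)\cap S_x}|y|^{-\beta p'}d\sigma)^{1/p'}$. This is finite exactly when $\beta p'<n-1$, which matches the endpoint condition in the interior of $OAB$ (a computation to verify). Alternatively, we would use the restricted weak-type interpolation of Lemma~\ref{weightedinterpolation} between two nearby exponents, as in Lemma~\ref{lem:upper endpoint}, and then real-interpolate to upgrade to strong type. We expect this upper endpoint to be the main obstacle.

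\emph{Output near the origin ($k\le0$, $l\approx0$).} This is the dual of the previous piece. The adjoint of $A_1$ is $A_1$, and $L^q(|x|^{q\beta})^*=L^{q'}(|x|^{-q'\beta})$, so the pair $(1/p,1/q)$ with weight $\beta$ becomes $(1/q',1/p')$ with weight $-\beta$. That point again lies in $\overline{OAB}$ by symmetry of the triangle under $(a,b)\mapsto(1-b,1-a)$. The previous step then applies with condition $-\beta\le n-1+\frac1{p'}-\frac n{q'}=\frac nq-\frac1p$, i.e.\ $\beta\ge -\frac nq+\frac1p$, including the endpoint. Combined with the previous step, this gives sufficiency, and since only finitely many $k$ near $0$ are involved in each, no summation issue arises.

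\emph{Necessity.} For the upper bound, take $f=\chi_{B(0,\delta)}|x|^{-\gamma}$ with $\gamma$ near the critical value. Then $A_1f\gtrsim \delta^{n-1-\gamma}$ on the shell $\{|\,|x|-1|<\delta\}$, and comparing norms as $\delta\to0$ (or letting $\gamma$ approach $\frac np+\beta$) forces $\beta\le n-1+\frac1q-\frac np$. For the lower bound, test with $f=\chi_{\{||y|-1|<\delta\}}$: then $A_1 f\gtrsim 1$ on $B(0,\delta)$, and duality (or a direct computation) forces $\beta\ge-\frac nq+\frac1p$.
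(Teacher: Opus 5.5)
Most of your plan is sound: the support analysis (including $k=-1$), the diagonal piece, the geometric sum over $l\le 0$ when $\beta<\beta_+:=n-1+\frac1q-\frac np$, the duality reduction of the output-near-origin piece to the input-near-origin piece at $(1-\frac1q,1-\frac1p)$ with weight $-\beta$, and the necessity examples (essentially the paper's Examples~1 and~2). \textbf{The gap is the endpoint $\beta=\beta_+$ of the input-near-origin piece.} The theorem includes this endpoint, and via duality your lower endpoint $\beta=\frac1p-\frac nq$ depends on it too. Your first suggestion does not work. H\"older on $S_x=\{y:|x-y|=1\}$ bounds $|A_1 g(x)|$ by the \emph{surface} norm $\|g|\cdot|^{\beta}\|_{L^p(S_x,d\sigma)}$, not by $\|g|\cdot|^{\beta}\|_{L^p(\R^n)}$. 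No pointwise bound in terms of the latter can hold, since $g$ may concentrate near $S_x$. If you keep the surface norm and integrate in $x$, you need $\|(A_1|h|^p)^{1/p}\|_{L^q(A_0)}\lesssim\|h\|_{L^p}$ with $h=g|\cdot|^\beta$. That amounts to $A_1\colon L^1\to L^{q/p}$ locally, which is false precisely when $q>p$, i.e.\ in the interior of $OAB$. (Your check $\beta_+p'<n-1$ is correct, but it only controls the weight factor.)

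Your second suggestion does close the gap, and more simply than Lemma~\ref{lem:upper endpoint}: only $k\approx 0$ occurs, and no weighted interpolation (Lemma~\ref{weightedinterpolation}) is needed. Freeze $\beta_+=\beta_+(p,q)$ and set $Sg=\chi_{\{1/2\le|x|<4\}}A_1(|\cdot|^{-\beta_+}\chi_{B(0,1)}g)$; the endpoint claim is exactly $S\colon L^p\to L^q$. For $g$ supported in $A_{-m}$, $m\ge1$, Lemma~\ref{singlelocalsmallsuport} gives $\|Sg\|_{L^s}\lesssim 2^{m(\beta_+-n+1-\frac1s+\frac nr)}\|g\|_{L^r}$. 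The exponent vanishes on the line $\frac1s-\frac nr=\frac1q-\frac np$ and equals $\pm n\delta$ at $(\frac1r\pm\delta,\frac1s)$. Hence, for $(\frac1r,\frac1s)$ on this line near $(\frac1p,\frac1q)$ and $g=\chi_E$,
\[
\|S\chi_E\|_{L^s}\lesssim\sum_{m\ge1}\min\big\{2^{mn\delta}|E|^{\frac1r+\delta},\,2^{-mn\delta}|E|^{\frac1r-\delta}\big\}\lesssim|E|^{\frac1r},
\]
for one and the same operator $S$. The off-diagonal Marcinkiewicz theorem between two such points then yields $S\colon L^p\to L^{q,p}\subset L^q$. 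On the open segment $OA$ you may simply quote Duoandikoetxea--Vega. With this filled in, your route is a valid, self-contained alternative.

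The paper's proof is much shorter. It complex-interpolates, with change of power weights, the Duoandikoetxea--Vega diagonal estimate (valid for the closed range $-\frac{n-1}{p}\le\beta\le\frac{n-1}{p'}$) against the unweighted $L^{\frac{n+1}{n}}\to L^{n+1}$ bound at $B$, where both endpoint values of $\beta$ vanish. Both endpoints are affine in $(\frac1p,\frac1q)$, and every interior point lies on a segment joining $B$ to the open segment $OA$, so the closed range follows at once. What your approach buys is independence from the diagonal endpoint result in the interior of $OAB$, and a transparent duality explanation of why the two endpoints mirror each other.
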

	\begin{proof}
		We will prove the weighted estimate for $t=1$, then a scaling argument implies the estimate for general $t>0$. When $\big(\frac{1}{p},\frac{1}{q}\big)$ lies on the open line segment $OA$, the result was proved in \cite[Proposition 3]{DV}. For $p=\frac{n+1}{n}$ and $q=n+1$, we have $-\frac{n}{q}+\frac{1}{p}=0=n - 1 + \frac{1}{q} - \frac{n}{p}$, and
		\[\|A_{1}^{\alpha} f\|_{L^{n+1}} \leq C \|f\|_{L^\frac{n+1}{n}}.\]
		Interpolating the above estimate with the known weighted estimate when $\big(\frac{1}{p},\frac{1}{q}\big)$ lies on the open line segment $OA$ proves the result for $A_1^\alpha$. Now, we give the scaling argument. We can rewrite $A^\alpha_t$ as
		$$ A_t^\alpha f(x) = t^\alpha A_1 h\Big(\frac{x}{t}\Big), $$
		where $h(y)=f(ty)$. Then, by a change of variable and boundedness for $A_1$, we get
		\begin{align*}
			\int_{\mathbb{R}^n} |A_t^\alpha f(x)|^q |x|^{q\beta} \, dx &= \int_{\mathbb{R}^n} t^{q\alpha} |A_1 h(x)|^q (t |x|)^{q\beta} t^n \, dx \\
			&\lesssim t^{q\alpha + q\beta + n} \left(\int_{\mathbb{R}^n} |h(x)|^p |x|^{p\beta} \, dx\right)^{\frac{q}{p}}\\
			&= t^{q\alpha + q\beta + n} t^{- q\beta - \frac{nq}{p}} \|f\|_{L^p(|x|^{p\beta})}^q\\
			&= \|f\|_{L^p(|x|^{p\beta})}^q,
		\end{align*}
		where we use the fact that $\alpha = \frac{n}{p} - \frac{n}{q}$.
	\end{proof}
	
	\section{Lacunary fractional spherical maximal function}\label{lacunary}
	In this section we give the

	\begin{proof}[Proof of Theorem \ref{thm:lac}]
		We first consider $\beta>0$. In view of Lemma \ref{direct}, it is enough to prove 
		\begin{align*}
			\left\|\sum_{k\in\mathbb Z}\chi_{A_k}A_{lac}^\alpha\Big(\sum_{l< k-100}f_l\Big)\right\|_{L^q(|x|^{q\beta})}\lesssim \|f\|_{L^p(|x|^{p\beta})}.
		\end{align*}
		Fix $k$ and $l< k-100$. Note that for $x\in A_k$, due to the support of $f_l$ in $A_l$, we can see that
		\[A_{lac}^\alpha f_l(x) \sim |A_{2^k}^\alpha f_l(x)|.\]
		By a scaling argument, we can rewrite
		\[A_{2^k}^\alpha f_l(x)=2^{k\alpha}A_1(f_l)_{2^k}(2^{-k}x),\]
		where $(f_l)_{2^k}(y)=f_l(2^ky)$ and $\supp((f_l)_{2^k})\subset A_{l-k}$.
		Since, $l-k<-100$, a change of variable and Lemma~\ref{localsmallsuport} with $\varepsilon=2^{l-k}$ gives
		\begin{align*}
			\|\chi_{A_k}A_{lac}^\alpha f_l\|_{L^q(|x|^{q\beta})} &\lesssim 2^{k(\alpha+\beta)} 2^{\frac{nk}{q}} 2^{(l-k)(n - 1 + \frac{1}{q} - \frac{n}{p})} \|(f_l)_{2^k}\|_{L^p}\\
			&= 2^{k(\alpha+\beta+\frac{n}{q})} 2^{(l-k)(n - 1 + \frac{1}{q} - \frac{n}{p})} 2^{-\frac{nk}{p}}\|f_l\|_{L^p}\\
			&\lesssim 2^{(l-k)(n - 1 + \frac{1}{q} - \frac{n}{p}-\beta)} \|f_l\|_{L^p(|x|^{p\beta})}
		\end{align*}
		where we used the fact that $\alpha=\frac{n}{p}-\frac{n}{q}$.
		
		Therefore, Young's inequality and the embedding $\ell_p\hookrightarrow \ell_q$ yield
		\begin{align*}
			\left\|\sum_{k\in\mathbb Z}\chi_{A_k}A_{lac}^\alpha\Big(\sum_{l< k-100}f_l\Big)\right\|_{L^q(|x|^{q\beta})} &\le \left(\sum_{k\in\Z}\left(\sum_{l\le k-100}\Big\|\chi_{A_k}A_{lac}^{\alpha}f_{l}\Big\|_{L^q(|x|^{q\beta})}\right)^q\right)^{\frac 1q}\\
			&\lesssim \left(\sum_{k\in\Z}\left(\sum_{l\le k-100}2^{-(k-l)(n - 1 + \frac{1}{q} - \frac{n}{p}-\beta)} \|f_{l}\|_{L^p(|x|^{p\beta})}\right)^q\right)^{\frac 1q}\\
			&\leq \sum_{k\geq 100}2^{-k(n - 1 + \frac{1}{q} - \frac{n}{p}-\beta)}\left(\sum_{l\in\Z}\|f_{l}\|_{L^p(|x|^{p\beta})}^q\right)^{\frac 1q}\\
			&\lesssim \left(\sum_{l\in\Z}\|f_{l}\|_{L^p(|x|^{p\beta})}^p\right)^{\frac 1p}\\
			&= \|f\|_{L^p(|x|^{p\beta})}.
		\end{align*}
		This completes the proof for positive exponents whenever $\beta<n-1+\frac{1}{q}-\frac{n}{p}$. 
		
		We next prove the negative endpoint estimate
		\[\|A_{lac}^\alpha f\|_{L^q(|x|^{-q\beta_-})} \lesssim \|f\|_{L^p(|x|^{-p\beta_-})},\]
		where $\beta_-=\frac nq-\frac 1p$ and $\big(\frac{1}{p},\frac{1}{q}\big)\in OAB$. Then, complex interpolation between the above estimate and positive exponent power-weights estimate proves Theorem~\ref{thm:lac}.
		
		Again by Lemma \ref{direct}, it remains to estimate the term
		\begin{align*}
			\left\|\sum_{k\in\Z}\chi_{A_k}A_{lac}^{\alpha} \Big(\sum_{l>k+100}f_{l}\Big)\right\|_{L^q(|x|^{-q\beta_-})}.
		\end{align*}
		Consider a linearisation of $A_{lac}^{\alpha}$ given by
		\[Tf(x):=2^{k(x)\alpha} \sigma_{2^{k(x)}}*f(x),\]
		for a measurable integer-valued function $k(x)$. For $j\in\mathbb Z$, define
		\[E_j:=\{x:k(x)=j\}.\]
		Note that the sets $E_j$ are pairwise disjoint. Then, we can write
		\begin{align*}
			\sum_{k\in\Z}\chi_{A_k}T \Big(\sum_{l>k+100}f_{l}\Big) &= \sum_{j\in\Z}\sum_{k\in\Z}\sum_{l>k+100}\chi_{E_j}\chi_{A_k}T f_{l}\\
			&= \sum_{j\in\Z}\sum_{l \in\Z}\sum_{k<l-100}\chi_{E_j}\chi_{A_k}T f_{l}\\
			&= \sum_{j\in\Z}\sum_{l \in\Z}\chi_{E_j}\chi_{B(0,2^{l-100})}T f_{l}.
		\end{align*}
		For a fixed $j$, if $x\in E_j\cap B(0,2^{l-100})$, we have $k(x)=j$ and the sphere $x-2^{k(x)}S^{n-1}$ has a non-empty intersection with the annulus $A_l$ only if $2^l= 2^j$ with
		\[|x-2^{k(x)}y|\sim 2^j, \quad \forall\, y\in S^{n-1}.\]
		Thus, on the set $E_j$, only $O(1)$ many functions $f_l$ will contribute. Therefore, we obtain
		\begin{align*}
			\sum_{k\in\Z}\chi_{A_k}T \Big(\sum_{l>k+100}f_{l}\Big) &\lesssim \sum_{l \in\Z}\chi_{E_{l}}\chi_{B(0,2^{l-100})}T f_{l}.
		\end{align*}
		
		For each $l$, when $x\in E_l$, by scaling, we obtain
		\[T f_{l}(x) = 2^{l\alpha} ~A_1(f_l)_{2^l}(2^{-l}x),\quad x\in E_l.\]
		Using the negative endpoint power-weight estimate for a single average in Theorem \ref{thm:single average} along with a scaling argument, we obtain
		\[\|\chi_{E_{l}}\chi_{B(0,2^{l-100})}T f_{l}\|_{L^q(|x|^{-q\beta_-})}\lesssim\|f_{l}\|_{L^p(|x|^{-p\beta_-})}.\]
		Therefore, using the embedding $\ell_p\hookrightarrow \ell_q$, we have
		\begin{align*}
			\left\|\sum_{l \in\Z}\chi_{E_{l}}\chi_{B(0,2^{l-100})}T f_{l}\right\|_{L^q(|x|^{-q\beta_-})}&= \left(\sum_{l \in\Z}\|\chi_{E_{l}}\chi_{B(0,2^{l-100})}T f_{l}\|_{L^q(|x|^{-q\beta_-})}^q\right)^{\frac 1q}\\
			&\lesssim \left(\sum_{l \in\Z}\|f_{l}\|_{L^p(|x|^{-p\beta_-})}^q\right)^{\frac 1q}\\
			&\leq \left(\sum_{l \in\Z}\|f_{l}\|_{L^p(|x|^{-p\beta_-})}^p\right)^{\frac 1p}\\
			&= \|f\|_{L^p(|x|^{-p\beta_-})}.
		\end{align*}
		Since the above estimate is independent of the linearization $T$, we obtain
		\[\left\|\sum_{k\in\Z}\chi_{A_k}A_{lac}^{\alpha} \Big(\sum_{l>k+100}f_{l}\Big)\right\|_{L^q(|x|^{-q\beta_-})}\lesssim \|f\|_{L^p(|x|^{-p\beta_-})}.\]
	\end{proof}
	
	\section{Full fractional spherical maximal function}\label{full}
	In this section we give the
	\begin{proof}[Proof of Theorem \ref{thm:full}]
		We first consider $\beta>0$. By Lemma \ref{direct}, it is enough to prove
		\begin{align*}
			\left\|\sum_{k\in\mathbb Z}\chi_{A_k}A_*^\alpha\Big(\sum_{l< k-100}f_l\Big)\right\|_{L^q(|x|^{q\beta})}\lesssim \|f\|_{L^p(|x|^{p\beta})}.
		\end{align*}
		We fix $k$ and $l< k-100$. Note that for $x\in A_k$, due to the support of $f_l$ in $A_l$, we have
		\[A_*^\alpha f_l(x) \leq \sup_{t\in [2^{k-1},2^{k+2}]}|A_t^\alpha f_l(x)|.\]
		Following the rescaling argument as in \eqref{eq:rescaling}, we get
		\begin{align*}
			\sup_{t\in [2^{k-1},2^{k+2}]}|A_t^\alpha f_l(x)|\le& 2^{(k-1)\alpha}\widetilde{A}_{*}\big((f_l)_{2^{k-1}}\big)(2^{-k+1}x)+2^{k\alpha}\widetilde{A}_{*}\big((f_l)_{2^k}\big)(2^{-k}x)\\
			&+2^{(k+1)\alpha}\widetilde{A}_{*}\big((f_l)_{2^{k+1}}\big)(2^{-k-1}x).
		\end{align*}
		Thus, by a change of variable argument and Lemma~\ref{localsmallsuport}, where by relabeling we replace $\gamma-\eta$ with $\gamma$, we obtain
		\begin{align*}
			\|\chi_{A_k}A_*^\alpha f_l\|_{L^q(|x|^{q\beta})} &\lesssim 2^{k(\alpha+\beta)} 2^{\frac{nk}{q}} 2^{(l-k)\gamma} \|(f_l)_{2^k}\|_{L^p}\\
			&= 2^{k(\alpha+\beta+\frac{n}{q})} 2^{(l-k)\gamma} 2^{-\frac{nk}{p}}\|f_l\|_{L^p}\\
			&\lesssim 2^{(l-k)(\gamma-\beta)} \|f_l\|_{L^p(|x|^{p\beta})}
		\end{align*}
		where we used the fact that $\alpha=\frac{n}{p}-\frac{n}{q}$.
		
		When $\beta<\gamma$, the choice of $\eta>0$ in Lemma~\ref{localsmallsuport} can be made such that $\beta$ is strictly less than the relabeled $\gamma$.  Therefore, Young's inequality and the embedding $\ell_p\hookrightarrow \ell_q$ give
		\begin{align*}
			\left\|\sum_{k\in\mathbb Z}\chi_{A_k}A_*^\alpha\Big(\sum_{l< k-100}f_l\Big)\right\|_{L^q(|x|^{q\beta})} &\le \left(\sum_{k\in\Z}\left(\sum_{l< k-100}\Big\|\chi_{A_k}A_{*}^{\alpha}f_{l}\Big\|_{L^q(|x|^{q\beta})}\right)^q\right)^{\frac 1q}\\
			&\lesssim \left(\sum_{k\in\Z}\left(\sum_{l< k-100}2^{-(k-l)(\gamma-\beta)} \|f_{l}\|_{L^p(|x|^{p\beta})}\right)^q\right)^{\frac 1q}\\
			&\leq \sum_{k> 100}2^{-k(\gamma-\beta)}\left(\sum_{l\in\Z}\|f_{l}\|_{L^p(|x|^{p\beta})}^q\right)^{\frac 1q}\\
			&\lesssim \left(\sum_{l\in\Z}\|f_{l}\|_{L^p(|x|^{p\beta})}^p\right)^{\frac 1p}\\
			&= \|f\|_{L^p(|x|^{p\beta})}.
		\end{align*}
        This proves the optimal positive exponent power weight estimates for the full fractional spherical maximal function, except for the upper endpoint in the region $OQR$. Now, we give the proof for the upper endpoint in the region $OQR$. 
        
        Let us recall the upper endpoint $\beta_+=n-1+\frac{n-1}{q}-\frac{n+1}{p}$ in the region $OQR$. Then, from Lemma \ref{direct}, it remains to prove
        \begin{align*}
			\left\|\sum_{k\in\mathbb Z}\chi_{A_k}A_*^\alpha\Big(\sum_{l< k-100}f_l\Big)\right\|_{L^q(|x|^{q\beta_+})}\lesssim \|f\|_{L^p(|x|^{p\beta_+})}.
		\end{align*}
        By relabeling, we can write
        \begin{align}
            &\left\|\sum_{k\in\mathbb Z}\chi_{A_k}A_*^\alpha\Big(\sum_{l< k-100}f_l\Big)\right\|_{L^q(|x|^{q\beta_+})}\nonumber\\
            &\leq \left\|\sum_{k\in\mathbb Z}\sum_{m>100}\chi_{A_k}A_*^\alpha\Big(|f_{k-m}|\Big)\right\|_{L^q(|x|^{q\beta_+})}\nonumber\\
            &= \left\|\sum_{m>100}\sum_{k\in\mathbb Z}|\cdot|^{\beta_++\frac nq}\chi_{A_k}A_*^\alpha\Big(|\cdot|^{-\beta_+-\frac np}|\cdot|^{\beta_++\frac np}|f_{k-m}|\Big)\right\|_{L^q(|x|^{-n})}\nonumber\\
            &= \|T(|\cdot|^{\beta_++\frac np}|f|)\|_{L^q(|x|^{-n})}.\label{eq:T reduction}
        \end{align}
        Note that, whenever $\left(\frac 1p,\frac 1q\right)\in OQR$, we can find $\delta>0$ sufficiently small and $\left(\frac{1}{r_0},\frac{1}{s_0}\right), \left(\frac{1}{r_1},\frac{1}{s_1}\right)\in OQR$ such that
\begin{align*}
\frac1{r_0}&=\frac1p+\delta,&\frac1{s_0}&=\frac1q+\frac{\delta}{n-1},\\
 \frac1{r_1}&=\frac1p-\delta,& \frac1{s_1}&=\frac1q-\frac{\delta}{n-1},
\end{align*}
 and $r_0,s_0,r_1,s_1$ satisfy the relation
\[\frac{n-1}{s_0}-\frac{1}{r_0}=\frac{n-1}{q}-\frac{1}{p}=\frac{n-1}{s_1}-\frac{1}{r_1}.\]
Therefore, applying Lemma \ref{lem:upper endpoint}, we obtain
\begin{align}
    \|Tg\|_{L^{s_0}(|x|^{-n})}&\lesssim \|g\|_{L^{r_0,1}(|x|^{-n})}\label{eq:r0s0}\\
    \|Tg\|_{L^{s_1}(|x|^{-n})}&\lesssim \|g\|_{L^{r_1,1}(|x|^{-n})}.\label{eq:r1s1}
\end{align}
A real interpolation between \eqref{eq:r0s0} and \eqref{eq:r1s1} gives
\[\|Tg\|_{L^{q}(|x|^{-n})}\lesssim \|g\|_{L^{p}(|x|^{-n})}.\]
Substituting the above estimate in \eqref{eq:T reduction} implies
\begin{align*}
    \left\|\sum_{k\in\mathbb Z}\chi_{A_k}A_*^\alpha\Big(\sum_{l< k-100}f_l\Big)\right\|_{L^q(|x|^{q\beta_+})}&\lesssim \||\cdot|^{\beta_++\frac np}f\|_{L^p(|x|^{-n})}\\
    &= \|f\|_{L^p(|x|^{p\beta_+})}.
\end{align*}
This completes the proof for positive power exponents.

		Next, we prove the negative exponent power-weight estimate
		\[\|A_{*}^\alpha f\|_{L^q(|x|^{-q\beta_-})} \lesssim \|f\|_{L^p(|x|^{-p\beta_-})}.\]
		Then, complex interpolation between the above estimate and positive exponent power-weighted estimate proves Theorem~\ref{thm:full}. In view of Lemma \ref{direct}, it is enough to estimate the term
		\begin{align*}
			\left\|\sum_{k\in\Z}\chi_{A_k}A_{*}^{\alpha} \Big(\sum_{l>k+100}f_{l}\Big)\right\|_{L^q(|x|^{-q\beta_-})}.
		\end{align*}
		We linearise the maximal operator $A_{*}^{\alpha}$ by
		\[Tf(x):=t(x)^\alpha \sigma_{t(x)}*f(x),\]
		where $t(x)$ is a measurable function, and define
		\[E_j:=\{x:2^j\leq t(x)<2^{j+1}\}.\]
		Note that the sets $E_j$ are pairwise disjoint. As in the lacunary case, we obtain
		\begin{align*}
			\sum_{k\in\Z}\chi_{A_k}T \Big(\sum_{l>k+100}f_{l}\Big) 
			&= \sum_{j\in\Z}\sum_{l \in\Z}\chi_{E_j}\chi_{B(0,2^{l-100})}T f_{l}.
		\end{align*}
		If $x\in E_j\cap B(0,2^{l-100})$ and the sphere $x-t(x)S^{n-1}$ meets the annulus $A_l$, then $2^j\sim t(x)\sim 2^l$. Consequently only three values of $l$ contribute for each $j$. Therefore, we obtain
		\begin{align*}
			\sum_{k\in\Z}\chi_{A_k}T \Big(\sum_{l>k+100}f_{l}\Big) &\lesssim \sum_{l \in\Z}\chi_{E_{l}}\chi_{B(0,2^{l-100})}T f_{l}.
		\end{align*}
		
		When $x\in E_l$, by scaling, we can write
		\[T f_{l}(x) = 2^{l\alpha}s(x)^\alpha (f_l)_{2^l}*\sigma_{s(x)}(2^{-l}x),\]
		where $t(x)=2^{l} s(x)$ with $s(x)\in[1,2]$. Then
		\[|T f_{l}(x)|\lesssim 2^{l\alpha}\widetilde{A}_{*}(f_l)_{2^l}(2^{-l}x).\]
		We claim the following estimate for $\widetilde{A}_{*}(f_l)_{2^l}$ when $\left(\frac{1}{p},\frac{1}{q}\right)\in OPQR\cup OP$.
		\begin{equation}\label{claimlocal}
			\|\chi_{B(0,2^{-100})}\widetilde{A}_{*} (f_l)_{2^l}\|_{L^q(|x|^{-q\beta_-})}\lesssim \|(f_l)_{2^l}\|_{L^p(|x|^{-p\beta_-})}\sim \|(f_l)_{2^l}\|_{L^p}.
		\end{equation}
		Let us assume \eqref{claimlocal} for the moment. Then, by rescaling, we obtain
		\begin{align*}
			\|\chi_{E_{l}}\chi_{B(0,2^{l-100})}T f_{l}\|_{L^q(|x|^{-q\beta_-})} &\lesssim 2^{l\alpha}2^{l (n/q-\beta_-)}\|(f_l)_{2^l}\|_{L^p}\\
			&\lesssim 2^{l (\alpha+n/q-\beta_-)}2^{-l n/p}\|f_{l}\|_{L^p}\\
			&\lesssim \|f_{l}\|_{L^p(|x|^{-p\beta_-})},
		\end{align*}
		where we used the condition $\alpha=n/p-n/q$. Since the sets $E_l$ are pairwise disjoint, using the embedding $\ell_p\hookrightarrow \ell_q$, we have
		\begin{align*}
			\left\|\sum_{l \in\Z}\chi_{E_{l}}\chi_{B(0,2^{l-100})}T f_{l}\right\|_{L^q(|x|^{-q\beta_-})}&= \left(\sum_{l \in\Z}\|\chi_{E_{l}}\chi_{B(0,2^{l-100})}T f_{l}\|_{L^q(|x|^{-q\beta_-})}^q\right)^{\frac 1q}\\
			&\lesssim \left(\sum_{l \in\Z}\|f_{l}\|_{L^p(|x|^{-p\beta_-})}^q\right)^{\frac 1q}\\
			&\leq \left(\sum_{l \in\Z}\|f_{l}\|_{L^p(|x|^{-p\beta_-})}^p\right)^{\frac 1p}\\
			&= \|f\|_{L^p(|x|^{-p\beta_-})}.
		\end{align*}
		Since the above estimate is independent of the linearization $T$, we obtain
		\[\left\|\sum_{k\in\Z}\chi_{A_k}A_{*}^{\alpha} \Big(\sum_{l>k+100}f_{l}\Big)\right\|_{L^q(|x|^{-q\beta_-})}\lesssim \|f\|_{L^p(|x|^{-p\beta_-})}.\]
		This completes the proof barring the estimate \eqref{claimlocal}.
		
		\subsection*{Proof of the claimed inequality \eqref{claimlocal}.}
		We will prove the claim in two parts. First, we will prove for $q\geq2$ with Littlewood-Paley decomposition and then we will extend to the remaining range by H\"older's inequality along with an interpolation argument for weighted estimates. Suppose that $\supp h\subset A_0$. To prove the claim in \eqref{claimlocal}, it is enough to show that
		\begin{align}\label{weighted_endpoint_lebesgue}
		\|\widetilde{A}_{*} h\|_{L^q(B(0,2^{-100}),|x|^{-q\beta_-})}\lesssim \|h\|_{L^p}.
		\end{align}
		
		\subsection*{When $q\geq2$.} Using the decomposition of $\widetilde{A}_{*}$ in \eqref{eq:deco}, we have
		\[\|\widetilde{A}_{*} h\|^q_{L^q(B(0,2^{-100}),|x|^{-q\beta_-})}\lesssim  \left\|M_0h\right\|^q_{L^q(B(0,2^{-100}),|x|^{-q\beta_-})} + \sum_{k<-100} \left\|\chi_{A_k}\sum_{j\geq1}M_{j}h\right\|^q_{L^q(|x|^{-q\beta_-})}.\]
        Since, the multiplier associated to the maximal operator $M_0$ is a compactly supported smooth function, therefore, we have
		\[M_0h\lesssim \mathcal{M}^\alpha h.\]
		Thus, power-weight estimates of the fractional maximal function $\mathcal M^\alpha$ give the desired estimate for low frequency term. 
		
		For high-frequency terms, Lemma \ref{demposedlocalweighted} gives
		\begin{align*}
			\sum_{k<-100} \left(\sum_{j\geq1} \left\|\chi_{A_k}M_{j}h\right\|_{L^q(|x|^{-q\beta_-})}\right)^q&\lesssim \sum_{k<-100} \left(\sum_{j\geq1}2^{-\epsilon|j+k|}\|h\ast\psi_{2^j}\|_{L^p}\right)^q\\
			&\leq \left(\sum_{k\in\Z}2^{-\epsilon |k|} \left(\sum_{j\geq1}\|h\ast\psi_{2^j}\|_{L^p}^q\right)^{1/q}\right)^q,
		\end{align*}
		where we applied the Young's inequality in the last step and relabeled $k$ for $-k$.  
		
		Since $p\leq q$ and $q\geq2$, Minkowski's integral inequality and the embedding $\ell_2\hookrightarrow \ell_q$ yield
		\begin{align*}
			\sum_{j\geq1}\|h\ast\psi_{2^j}\|_{L^p}^q&\leq \left\|\left(\sum_{j\geq1}|h\ast\psi_{2^j}|^q\right)^{\frac{1}{q}}\right\|_{L^p}^q\\
			&\leq \left\|\left(\sum_{j\geq1}|h\ast\psi_{2^j}|^2\right)^{\frac{1}{2}}\right\|_{L^p}^q\\
			&\lesssim \|h\|_{L^p}^q,
		\end{align*}
		where we used Littlewood-Paley theory in the last inequality. This proves the desired claim in \eqref{claimlocal} for $\left(\frac{1}{p},\frac{1}{q}\right)\in OPQR$ when $q\geq2$.  
		
		As it will be useful to treat the remaining case, let us next record a Lorentz space refinement of the estimate \eqref{weighted_endpoint_lebesgue}. For any $\left(\frac{1}{p},\frac{1}{q}\right)\in OPQR$ with $q> 2$, we can find points $P_0=\left(\frac{1}{p_0},\frac{1}{q_0}\right), P_1=\left(\frac{1}{p_1},\frac{1}{q_1}\right)\in OPQR$ with $q_0,q_1>2$ such that
\begin{align*}
\frac{1}{p} = \frac{1/2}{p_0} + \frac{1/2}{p_1},\quad \frac{1}{q} = \frac{1/2}{q_0} + \frac{1/2}{q_1}
\end{align*}
and
\begin{align*}
\frac{1}{p_0}-\frac{n}{q_0}=\frac{1}{p_1}-\frac{n}{q_1}.
\end{align*}
		 Then by weighted real interpolation between $P_0$ and $P_1$ using Lemma \ref{weightedinterpolation} with $w_0=w_1=|x|^{-\beta_-}$, one has the improved estimate
		\begin{equation}\label{betterlocal}
			\||x|^{-\beta_-}\chi_{B(0,2^{-100})}\widetilde{A}_{*} h\|_{L^{q,p}} = \|\chi_{B(0,2^{-100})}\widetilde{A}_{*} h\|_{L^{q,p}(|x|^{-\beta_-},\, dx)}\lesssim \|h\|_{L^p}.
		\end{equation}

		\subsection*{When $q<2$.}

Let $\frac{n}{n-1}<p\leq q<2$, we choose $r>2$ such that $\left(\frac{1}{p},\frac{1}{r}\right)\in OPQR$. We write
		\[\beta_-=\frac{n}{q}-\frac{n}{r}+\frac{n}{r}-\frac{1}{p}.\]
 H\"older's inequality on the Lorentz scale, the fact that $|x|^{-(n/q-n/r)}\in L^{\frac{qr}{r-q},\infty}$, and the embedding $L^{r,p}\hookrightarrow L^{r,q}$ give
		\begin{align*}
			\|\chi_{B(0,2^{-100})}\widetilde{A}_{*} h\|_{L^q(|x|^{-q\beta_-})}&= \||\cdot|^{-(n/q-n/r)-(n/r-1/p)}\chi_{B(0,2^{-100})}\widetilde{A}_{*}h\|_{L^q}\\
			&\lesssim \||\cdot|^{-(n/q-n/r)}\|_{L^{\frac{qr}{r-q},\infty}}\||\cdot|^{-(n/r-1/p)}\chi_{B(0,2^{-100})}\widetilde{A}_{*}h\|_{L^{r,q}}\\
			&\lesssim \||\cdot|^{-(n/r-1/p)} \chi_{B(0,2^{-100})} \widetilde{A}_{*}h\|_{L^{r,p}}\\
			&\lesssim \|h\|_{L^p}.
		\end{align*}
		This completes the proof of \eqref{claimlocal} for the remaining range $q<2$.
	\end{proof}

	\section{Necessary Conditions for Weighted Boundedness}\label{examples}
	
		\subsection{Single average}
		
		We show that the conditions in Theorem \ref{thm:single average} are necessary.  As noted in the proof, the estimate for $A_t^\alpha$ is equivalent to the one for $A_1^\alpha = A_1$, with the same constant.  
		
		\subsection*{Example 1.}
	Let $0<\delta<1/2$ and set $f =\chi_{A_\delta},$ where $A_\delta = \{y \in \mathbb{R}^n : 1 -\delta \le |y| \le 1+\delta\}$. The weight $|x|^{p\beta}$ is comparable to 1 in $A_\delta$ and $|A_\delta|\sim \delta$. Thus,
	\[\|f\|_{L^p(|x|^{p\beta})} \sim \delta^{1/p}.\]
	For $|x| < \delta/2$, the sphere centered at $x$ of radius $1$ lies completely within the shell $A_\delta$. Thus, $A_{1}^{} f(x) \sim 1$ for all $|x| < \delta/2$. 
	
	Therefore,
	\[\|A_{1}^{} f\|_{L^q(|x|^{q\beta})} \gtrsim \left( \int_{|x| < \delta/2} |x|^{q\beta} dx \right)^{1/q} \sim \delta^{\beta + \frac{n}{q}},\]
	whenever $\beta>-\frac{n}{q}$. If the estimate in Theorem \ref{thm:single average} holds, then $\delta^{\beta + \frac{n}{q}} \lesssim \delta^{1/p}$. Now, taking the limit $\delta \to 0$ gives the necessary condition
	\begin{equation}
		\frac{1}{p} - \frac{n}{q}\le \beta.
	\end{equation}

\subsection*{Example 2.}
Let $f =\chi_{B(0,1)}$ and $E_k = \left\{ x \in \mathbb{R}^n : \left| |x| - 2^k \right| < \frac{1}{2} \right\}.$ Then $\|f\|_{L^p(|x|^{p\beta})} \sim 1$ if $\beta>-\frac{n}{p}$ and $|E_k|\sim 2^{k(n-1)}$.

For $x\in E_k$, the sphere centered at $x$ of radius $2^k$ has a surface area of order $2^{k(n-1)}$ and intersects the unit ball centered at the origin with a measure of some constant. Then, for all $x \in E_k$, we have $A_{2^k}^{\alpha} f(x) \gtrsim 2^{k(1-n+\alpha)}$ and 
	\[\|A_{2^k}^{\alpha} f\|_{L^q(|x|^{q\beta})}^q \gtrsim \int_{E_k} \left(2^{k(1-n+\alpha)}\right)^q |x|^{q\beta} dx \sim  2^{kq(1-n+\alpha+\beta)} \cdot 2^{k(n-1)}.\]
Therefore the boundedness of the single average implies
\[
2^{kq(1-n+\alpha+\beta)} \cdot 2^{k(n-1)} \lesssim 1.
\]
The relation $\alpha = \frac{n}{p} - \frac{n}{q}$ and sending $k \to \infty$ yields the condition
\begin{equation}
\beta \leq n - 1 + \frac{1}{q} - \frac{n}{p}.
\end{equation}

Combining Examples 1 and 2, the estimate in Theorem \ref{thm:single average} holds for some (equivalently, every) $t>0$ only if
\[
\frac1p-\frac nq\le\beta\le n-1+\frac1q-\frac np.
\]

	\subsection{Lacunary fractional spherical maximal function}
Examples 1 and 2 show the same conditions are necessary for the lacunary fractional spherical maximal operator.  We give an example which shows strict inequality for the upper endpoint of the range of $\beta$.
	\subsection*{Example 3.}
	Let $f, E_k$ be as in Example 2.  Then, for all $x \in E_k$, we have $A_{lac}^{\alpha} f(x) \gtrsim 2^{k(1-n+\alpha)}$ and 
	\[\|A_{lac}^{\alpha} f\|_{L^q(|x|^{q\beta})}^q \gtrsim \sum_{k=1}^\infty \int_{E_k} \left(2^{k(1-n+\alpha)}\right)^q |x|^{q\beta} dx \sim \sum_{k=1}^\infty 2^{kq(1-n+\alpha+\beta)} \cdot 2^{k(n-1)}.\]
	This geometric series converges if and only if the exponent is strictly negative, i.e.,
	\[q(1-n+\alpha) + q\beta + n - 1 < 0.\]
	Substituting $\alpha = \frac{n}{p} - \frac{n}{q}$, we obtain the condition
	\begin{equation}
		\beta < n - 1 + \frac{1}{q} - \frac{n}{p}.
	\end{equation}
	
	Combining the above example with Example 1 for the single average, the inequality \eqref{eq:main_lac} holds true for the lacunary fractional spherical maximal function only if
	\[\frac{1}{p} - \frac{n}{q} \le \beta < n - 1 + \frac{1}{q} - \frac{n}{p}.\]

	\subsection{Full fractional spherical maximal function}
Examples 1 and 3 show the same conditions are necessary for the full fractional spherical maximal operator.  We here give two more examples which further restrict the admissible range of $\beta$.
		
	\subsection*{Example 4.}
	Let $f =\chi_{B(0,1)}$. Then $\|f\|_{L^p(|x|^{p\beta})} \sim 1$ if $\beta>-\frac{n}{p}$.
	
	For $|x| \gg 1$ and $t = |x|$, the sphere centered at $x$ of radius $t$ has a surface area of order $|x|^{n-1}$ and intersects the unit ball centered at the origin with a measure of some constant. Thus, $A_{*}^{\alpha} f(x) \gtrsim |x|^{1-n+\alpha}$ and
	\[\|A_{*}^{\alpha} f\|_{L^q(|x|^{q\beta})}^q\gtrsim \int_{|x|>2} \left( |x|^{1-n+\alpha} \right)^q |x|^{q\beta} dx = \int_2^\infty r^{(1-n+\alpha)q + q\beta + n - 1} dr.\]
	The above integral is finite only if $(1-n+\alpha)q + q\beta + n < 0$, which gives the condition
	\begin{equation}
		\beta < n - 1 - \frac{n}{p}.
	\end{equation}
	
	\subsection*{Example 5.}
	Let $f =\chi_{R_\delta}$, where $R_\delta = [\delta,C\delta]^{n-1}\times[-\delta^2,\delta^2]$. Then $|R_\delta|=C'\delta^{n+1}$, and $\|f\|_{L^p(|x|^{p\beta})} \sim \delta^{\beta + \frac{n+1}{p}}$.
	
	For $x\in E_\delta = [\delta,2\delta]^{n-1}\times[1,2]$, we choose $t= x_n$. Then, the sphere centered at $x$ of radius $t$ captures an intersection area of $\delta^{n-1}$ with $R_\delta$. When $x\in E_\delta$ the weight $|x|^{q\beta} \sim 1$. Thus, we get
	\[\|A_{*}^{\alpha} f\|_{L^q(|x|^{q\beta})} \gtrsim \left( \int_{E_\delta} (\delta^{n-1})^q dx \right)^{1/q} \sim \delta^{(n-1)\left(1 + \frac{1}{q}\right)}.\]
	For the inequality \eqref{eq:main} to hold, we need $\delta^{(n-1)\left(1 + \frac{1}{q}\right)} \lesssim \delta^{\beta + \frac{n+1}{p}}$. Now taking the limit $\delta \to 0$, we get the condition
	\begin{equation}
		\beta \le n-1 + \frac{n-1}{q} - \frac{n+1}{p}.
	\end{equation}
	
	Combining all the examples, we get that \eqref{eq:main} holds true only if
	\[\frac{1}{p} - \frac{n}{q} \le \beta <  n - 1 - \frac{n}{p}\]
	whenever $\left(\frac{1}{p},\frac{1}{q}\right)\in OPQ\cup OQ$, and
	\[\frac{1}{p} - \frac{n}{q} \le \beta \le n-1 + \frac{n-1}{q} - \frac{n+1}{p},\]
	whenever $\left(\frac{1}{p},\frac{1}{q}\right)\in OQR$.

	\section*{Acknowledgements}
	R.~Basak is supported by the National Science and Technology Council of Taiwan under research grant number 113-2811-M-003-014.  
	S. Choudhary is supported by the National Center for Theoretical Sciences through National Science and Technology Council of Taiwan under research grant number 115-2124-M-002-009-.
	C.-Y. Shen is supported by the National Science and Technology Council of Taiwan through grant 111-2115-M-002-010-MY5.
	D. Spector is supported by the National Science and Technology Council of Taiwan under research grant number 113-2115-M-003-017-MY3.
	
	\section*{AI Disclosure}
	
The authors utilised Claude Opus to generate TikZ Figure 3 based on Figures 1,2 from their preceding manuscript.  The use of H\"older's inequality on the Lorentz scale to extend the inequality \eqref{betterlocal} to the range $q< 2$ and the idea of how to obtain the validity of Theorem \ref{thm:full} at the endpoint $\beta = (n-1) + (n-1)/q-(n+1)/p$ in the triangle $OQR$ ($OPR$ when $n=2$) were obtained through a dialogue with ChatGPT.

\end{document}